\documentclass{article}
\usepackage{graphicx}
\usepackage{amsfonts}
\usepackage{amsmath}
\usepackage{url}
\usepackage{amsthm}
\usepackage{amssymb}
\newtheorem {Proposition}{Proposition}[section]
\newtheorem {Lemma}[Proposition] {Lemma}
\newtheorem {Theorem}[Proposition]{Theorem}
\newtheorem {Corollary}[Proposition]{Corollary}
\newtheorem {Remark}[Proposition]{Remark}


\def\N{\mathbb{N}}

\def\R{\mathbb{R}}
 
\usepackage{enumerate}
\usepackage{bbm}
\usepackage[utf8]{inputenc} 
\usepackage[T1]{fontenc}    
\usepackage{hyperref}       
\usepackage{url}            
\usepackage{booktabs}       
\usepackage{amsfonts}       
\usepackage{nicefrac}       
\usepackage{microtype}      
\usepackage{lipsum}
\usepackage{graphicx}
\graphicspath{ {./images/} }
\usepackage{authblk}

\author{Eustasio del Barrio$^{(1)}$\footnote{Research partially supported by FEDER, Spanish Ministerio de Econom\'ia y Competitividad, grant MTM2017-86061-C2-1-P and Junta de Castilla y Le\'on, grants VA005P17 and VA002G18.}, Alberto Gonz\'alez-Sanz$^{(2)}$, and Jean-Michel Loubes $^{(3)}$\footnote{Research partially supported by the AI Interdisciplinary Institute ANITI, which is funded by the French “Investing for the Future – PIA3” program under the Grant agreement ANR-19-PI3A-0004.}\\  $\,$ \\ 
{ $^{(1)(2)}$IMUVA, Universidad de Valladolid, Spain} \\ 
$^{(2)(3)}$IMT, Universit\'e de Toulouse
France\\ $\,$ \\ 
$^{(1)}$tasio@eio.uva.es \quad $^{(2)}$alberto.gonzalez sanz@math.univ-toulouse.fr \\ $^{(3)}$loubes@math.univ-toulouse.fr}

\title{A Central Limit Theorem for Semidiscrete Wasserstein Distances.}

\date{\vspace{-5ex}}

\begin{document}
\maketitle
\begin{abstract}
We address the problem of proving a Central Limit Theorem for the empirical optimal transport cost,  $\sqrt{n}\{\mathcal{T}_c(P_n,Q)-\mathcal{W}_c(P,Q)\}$, in the semi discrete case, i.e when the distribution $P$ is finitely supported. We show that the asymptotic distribution is the supremun of a centered Gaussian process   which is Gaussian under some additional conditions on the probability $Q$ and on the cost. Such results imply the central limit theorem for the $p$-Wassertein distance, for $p\geq 1$.  Finally, the semidiscrete framework provides a control on the second derivative of the dual formulation,  which yields  the first central limit theorem for the optimal transport potentials.
\end{abstract}

\section{Introduction}

A large number of problems in statistics or computer science require the comparison between histograms or,  more generally, measures. Optimal transport has proven  to be an important tool to compare probability measures since it enables to define a metric over the set of distributions which convey their geometric properties., see \cite{verdinelli}. Moreover, together with the convergence of the moments, it metrizes the weak convergence,  see Chapter 7.1. in \cite{villani2003topics}.  It is nowadays used in a large variety of fields, moreover, in statistics and in particular in Machine learning, OT based methods have been developed to tackle problems in fairness as in \cite{ jiang2020wasserstein,pmlr-v97-gordaliza19a,delBarrioGordalizaLoubes2019}, in domain adaptation (\cite{shen2018wasserstein}), or transfer learning (\cite{gayraud2017optimal}). Hence there is a growing need for theoretical results to support such applications and provide theoretical guarantees on the asymptotic distribution.\vspace{5mm} 

In this framework, the optimal transport between  distributions has to be estimated from a random sample of the empirical distributions. Hence some controls have to be added on the rate of convergence of the the empirical version of the optimal transport cost to its true value. Moreover determining the asymptotic behavior of the limit distribution enables to control precisely the deviations of the estimation errors and certify the accuracy of the estimation. \vskip .1in
This work focuses on the semi-discrete optimal transport, i.e. when one of both probabilities is supported on a discrete set.  Such a problem has been studied in many contexts, including on resource allocation problem, points versus demand distribution, positions of sites such that the mean allocation cost is minimal (\cite{Hartmann}), resolution of the incompressible Euler equation  using Lagrangian methods (\cite{Gallou}), non-imaging optics; matching between a point cloud and a triangulated surface; seismic imaging( \cite{MEYRON201913}), generation of blue noise distributions with applications for instance to low-level hardware implementation in printers( \cite{deGoes}), in astronomy (\cite{levy2020fast}). But in a more statistical point of view it can also be  used to implement Goodness-of-fit-tests, in detecting deviations from a density map to have $P\neq Q$, by using the fluctuations of $\mathcal{W}(P_n,Q)$, see \cite{Hartmann} and to the new transport based generalization of the distribution function, proposed by \cite{DELBARRIO2020104671}, when the probability is discrete.
\vskip .1in
The most general formulation of the optimal transport problem considers $\mathcal{X}, \mathcal{Y}$ both Polish spaces. We use the notation $\mathcal{P}(\mathcal{X})$ (resp. $\mathcal{P}(\mathcal{Y})$) for the set of Borel probability measures on $\mathcal{X}$ (resp. $\mathcal{Y}$). The optimal transport problem  between $P\in \mathcal{P}(\mathcal{X})$ and $Q\in \mathcal{P}(\mathcal{Y})$ for the cost $c:\mathcal{X} \times \mathcal{Y}\rightarrow [0,\infty)$ is formulated as the solution of 
\begin{align}\label{kant}
\mathcal{T}_c(P,Q):=\inf_{\gamma \in \Pi(P,Q)}\int_{\mathcal{X}\times \mathcal{Y}} c(\textbf{x},\textbf{y}) d \pi(\textbf{x}, \textbf{y}),
\end{align}
where $\Pi(P,Q)$ is the set of probability measures $\pi \in \mathcal{P}(\mathcal{X}\times \mathcal{Y})$ such that $\pi(A\times \mathcal{Y})=P(A)$ and $\pi(\mathcal{Y} \times B)=Q(B)$ for all $A,B$ measurable sets. \\ \\
If $c$ is continuous and there exist two continuous functions $a\in L^1(P)$ and $b\in L^1(Q)$ such that 
\begin{equation}\label{cost-cond}
    \text{for all $(\mathbf{x},\mathbf{y})\in \operatorname{supp}(P)\times \operatorname{supp}(Q)$,}\ \ c(\mathbf{x},\mathbf{y})\geq a(\mathbf{x})+b(\mathbf{y}),
\end{equation}
then Kantorovich problem \eqref{kant} can be formulated in a dual form, as
\begin{align}\label{dual}
\mathcal{T}_c(P,Q)=\sup_{(f,g)\in \Phi_c(P,Q)}\int f(\textbf{x}) dP(\textbf{x})+\int g(\textbf{y}) dQ(\textbf{y}),
\end{align}
where $\Phi_c(P,Q)=\{ (f,g)\in L_1(P)\times L_1(Q): \ f(\textbf{x})+g(\textbf{x})\leq c(\textbf{x},\textbf{y}) \}$, see for instance Theorem 5.10 in \cite{villani2008optimal}. It is said that  $\psi\in L_1(P)$ is an \emph{optimal transport potential from $P$ to $Q$ for the cost $c$} if there exists $\varphi\in L_1(Q)$ such that the pair $(\psi, \varphi)$ solves \eqref{dual}. 
\vspace{5mm} 

We assume the observation of a sample $X_1, \dots, X_n$ i.i.d. with law $P$. The empirical measure $P_n=\frac{1}{n}\sum_{k=1}^n\delta_{X_k}$ defines a random $\mathcal{T}_c(P_n,Q)$. Supposing that $P,Q\in \mathcal{P}(\R^d)$ and both are absolutely continuous with respect to the Lebesgue measure, \cite{Fournier} proved that $E\mathcal{W}_1(P_n,Q)$ converges to $0$ with an order approximately of $\frac{1}{n^{1/d}}$. By using the triangular inequality, valid in Wassertein distances, we can not expect better order of convergence for the difference $\{\mathcal{W}_1(P_n,Q)-\mathcal{W}_1(P,Q)\}$. But when it comes to the difference $\sqrt{n}\{\mathcal{T}_c(P_n,Q)-E\mathcal{T}_c(P_n,Q)\}$, the results of \cite{delbarrio2021central, delbarrio2019} prove, using the Efron-Stein's inequality, that it follows a Gaussian asymptotic behavior. In the case of $P,Q$ being supported in a finite (resp. countable) set, in \cite{Sommerfeld2018} (resp. \cite{tameling2019}) proved that $\sqrt{n}\{\mathcal{W}_c(P_n,Q)-\mathcal{W}_c(P_n,Q)\}$ has a weak limit $X$. Such a limit is the supremun of a Gaussian process. They start by the identification of the space of distribution supported in a finite set $\mathcal{X}=\{\mathbf{x}_1, \dots, \mathbf{x}_m\}$ with $\R^m$, and then give a proof based on the directional Hadamard differentiability of the functional $(\mathbf{p},\mathbf{q})\mapsto \mathcal{T}_c(\sum_{i=1}^m p_i\delta_{\mathbf{x}_i}, \sum_{i=1}^m{q_i}\delta_{\mathbf{x}_i})$.\vspace{5mm} 

In this work we are concerned with the asymptotic behaviour of $\sqrt{n}\{\mathcal{T}_c(P_n,Q)-\mathcal{T}_c(P_n,Q)\}$ when $P$ is finitely supported but not $Q$. When the quadratic cost is involved and $Q$ is absolutely continuous with respect to the Lebesgue measure, with convex support, \cite{delbarrio2019} proves that the limit is in fact Gaussian. Their approach is also based in some differentiability properties of the optimal transport problem. But one of their main arguments is that the optimal transport potential is unique which is no longer true in general costs and neither in general Polish spaces. We need to use weaker notions of derivative and follow similar arguments of those of \cite{Sommerfeld2018} and \cite{tameling2019} to derive our main result, Theorem~\ref{theo:semi_disc}.  As pointed out before, the result of \cite{Sommerfeld2018} establishes that, if $P$ and $Q$ are both supported in a finite set, then
\begin{align*}
	\sqrt{n}\left(\mathcal{T}_c(P_n,Q)- \mathcal{T}_c(P,Q)\right)\overset{w}\longrightarrow \sup_{\mathbf{z}\in \operatorname{Opt}_c(P,Q)} \mathbb{G}(\mathbf{z}), \ \ \text{where} \ \ \mathbb{G}(\mathbf{z}):=\sum_{i=1}^m z_i  X_i
\end{align*}
and $(X_1,\dots,X_n)$ is a centered Gaussian vector and $\operatorname{Opt}_c(P,Q)$  is the set of solutions of the dual problem \eqref{dual}, both described in section~\ref{sectionCL}.  Lately \cite{tameling2019} extended the same result for probabilities supported in countable spaces.  Our work
covers and generalizes the previously mentioned result of \cite{Sommerfeld2018} through Theorem~\ref{theo:semi_disc}. We use similar tools but we make a slightly different extension. First we define in \eqref{functional} a functional $\mathcal{M}$ countinously differentiable with respect to $\mathbf{p}$ in the positive hyperoctant,  and where the optimization is made in the whole space $\R^m$, see Lemma~\ref{Lemma:dual}. Hence it is a unconstrained optimization problem, consequently we can apply Danskin's theorem (Theorem 4.13 in \cite{Bonnans2000PerturbationAO}) instead of its Lagrangian counterpart, i.e. Theorem 4.24 in \cite{Bonnans2000PerturbationAO}). Such a slightly modification of the functional allows to realize that the central limit theorem holds assuming only finite support on one of both probabilities, which yields the central limit theorem for the semidiscrete case.  Summarizing, the main difference between the approach of \cite{Sommerfeld2018} and  the one we make in section~\ref{sectionCL} is the lack of assumptions on the probability $Q$.  Section~3 goes further and analyses the cases where the transport potential is unique up to additive constants.  In such cases, under some assumption of regularity on the cost and on $Q$,  the limit is not a supremun anymore, but simply a centered Gaussian random variable. The last section studies the semidiscrete O.T. in manifolds and gives a CLT not for the transport cost but for the solutions of the dual problem \eqref{dual}. We underline that if both probabilities are continuous and the space is not one dimensional, we cannot expect such type of central limit for the potentials, since, as commented before,  the expected value of the estimation of the transport cost converges with rate $O(n^{-\frac{1}{d}})$ and no longer $o(n^{-\frac{1}{2}})$. When the two samples are discrete, even if such a rate is $o(n^{-\frac{1}{2}})$, the lack of uniqueness of the dual problem does not allow to prove such type of problems. In consequence, the semidiscrtete is the unique case where such results, for the potentials of the O.T. problem in general dimension, can be expected.
\section{Central Limit Theorem}\label{sectionCL}
The goal is to prove a Central Limit theorem for the optimal transport cost in general Polish spaces $\mathcal{X}, \mathcal{Y}$ for the semi-discrete problem. For that reason during the rest of the paper we will assume that $P$ is supported in a a finite set.  Let $\mathbb{X}=\{\mathbf{x}_1, \dots, \mathbf{x}_m \}\subset \mathcal{X}$ be such that $\mathbf{x}_i\neq \mathbf{x}_j$, for $i\neq j$, and  denote by $\mathcal{P}(\mathbb{X})$ the set of probabilities defined in such set, i.e.  any $P\in \mathcal{P}(\mathbb{X})$ can be written as
\begin{equation}\label{represen}
   \text{$P:=\sum_{k=1}^mp_k\delta_{\mathbf{x}_k}$, where  $p_i>0$, for all $i=1, \dots, m$, and $\sum_{k=1}^mp_k=1$.
 } 
\end{equation}
In consequence $P$ is characterized by the vector $\mathbf{p}=(p_1, \dots, p_m)\in \R^m$. The following result shows that for the previous defined $P$, the optimal transport problem is equivalent to a optimization problem on the first coordinate of the following functional  over $\R^m\times\R^m$ defined as 
\begin{align}\label{functional}
\begin{split}
\mathcal{M}:\R^m\times \R^m &\rightarrow \R \\
    (\mathbf{z},\mathbf{p})&\mapsto \mathcal{M}(\mathbf{z},\mathbf{p}):= \frac{1}{||\mathbf{p}||_1}\sum_{k=1}^{m}|p_k| z_k +\int \min_{i=1, \dots, m} \{ c(\mathbf{y},\mathbf{x}_i ) -z_i \}dQ(\mathbf{y}),
\end{split}
\end{align}
where $||\mathbf{p} ||_1=\sum_{k=1}^{m}|p_k|$. 
The following result, which is a straightforward adaptation of Proposition 4.2 in \cite{delbarrio2019}, shows that for the previous defined $P$, the optimal transport problem is equivalent to a optimization problem on the first coordinate of $\mathcal{M}$. 
\begin{Lemma}\label{Lemma:dual}
Let $P\in \mathcal{P}(\mathbb{X})$ , $Q\in \mathcal{P}(\mathcal{Y})$ and $c$ be a cost satisfying  \eqref{cost-cond}, then 
\begin{align}\label{discrete_dual}
\mathcal{T}_c(P,Q)=\sup_{\mathbf{z}\in \R^m} \mathcal{M}(\mathbf{z}, \mathbf{p}).
\end{align}

\end{Lemma}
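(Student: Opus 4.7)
The plan is to reduce the dual formulation \eqref{dual} to a finite-dimensional supremum by exploiting the finite support of $P$ and performing the $c$-transform explicitly on the $g$-variable. Since $P$ is supported on $\mathbb{X}=\{\mathbf{x}_1,\dots,\mathbf{x}_m\}$, any $f\in L_1(P)$ is completely determined, up to $P$-null modifications, by the vector $\mathbf{z}=(f(\mathbf{x}_1),\dots,f(\mathbf{x}_m))\in\R^m$, and conversely every $\mathbf{z}\in\R^m$ defines an admissible $f$ by setting $f(\mathbf{x}_i)=z_i$ (the values of $f$ off $\mathbb{X}$ are irrelevant for the dual functional and for the constraint). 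The first term of the dual becomes $\int f\,dP=\sum_{k=1}^m p_k z_k$, which, since $p_k>0$ and $\sum_k p_k=1$ by \eqref{represen}, is exactly $\tfrac{1}{\|\mathbf{p}\|_1}\sum_{k=1}^m |p_k|z_k$.

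Next I would fix $\mathbf{z}$ and optimize over $g$. The admissibility constraint $f(\mathbf{x})+g(\mathbf{y})\leq c(\mathbf{x},\mathbf{y})$ reduces, on the support of $P$, to $g(\mathbf{y})\leq c(\mathbf{x}_i,\mathbf{y})-z_i$ for every $i=1,\dots,m$ and every $\mathbf{y}\in\operatorname{supp}(Q)$, so the pointwise maximal admissible $g$ is the $c$-transform
\begin{equation*}
g_{\mathbf{z}}(\mathbf{y}):=\min_{i=1,\dots,m}\{c(\mathbf{x}_i,\mathbf{y})-z_i\},
\end{equation*}
which is Borel measurable as a finite minimum of continuous functions. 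Using the bound \eqref{cost-cond}, each $c(\mathbf{x}_i,\mathbf{y})-z_i$ dominates $a(\mathbf{x}_i)+b(\mathbf{y})-z_i$, hence $g_{\mathbf{z}}(\mathbf{y})\geq b(\mathbf{y})+\min_i\{a(\mathbf{x}_i)-z_i\}$, which is $Q$-integrable. Thus $\int g_{\mathbf{z}}\,dQ$ is well defined in $(-\infty,+\infty]$, and taking $g=g_{\mathbf{z}}$ is either admissible in $\Phi_c(P,Q)$ or yields $+\infty$ on both sides of the claimed identity.

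To conclude the equality, I would argue the two inequalities separately. For the ``$\geq$'' direction: given any admissible $(f,g)\in\Phi_c(P,Q)$, setting $z_i=f(\mathbf{x}_i)$ gives $g\leq g_{\mathbf{z}}$ $Q$-a.e., so
\begin{equation*}
\int f\,dP+\int g\,dQ\ \leq\ \sum_{k=1}^m p_k z_k+\int g_{\mathbf{z}}\,dQ\ =\ \mathcal{M}(\mathbf{z},\mathbf{p}),
\end{equation*}
whence $\mathcal{T}_c(P,Q)\leq\sup_{\mathbf{z}}\mathcal{M}(\mathbf{z},\mathbf{p})$. For the ``$\leq$'' direction, given $\mathbf{z}\in\R^m$ such that $\mathcal{M}(\mathbf{z},\mathbf{p})<\infty$, the pair $(f,g_{\mathbf{z}})$ defined above lies in $\Phi_c(P,Q)$ and realizes $\mathcal{M}(\mathbf{z},\mathbf{p})$ as a feasible dual value; if $\mathcal{M}(\mathbf{z},\mathbf{p})=+\infty$ for some $\mathbf{z}$, a truncation argument $g_{\mathbf{z}}\wedge N$ produces admissible pairs with dual value tending to $+\infty$, matching the trivial bound.

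The main technical point, and the only place where one has to be careful, is the integrability bookkeeping for $g_{\mathbf{z}}$: this is precisely what assumption \eqref{cost-cond} is designed to handle, and it lets us exchange the constrained dual of Kantorovich for the unconstrained finite-dimensional supremum of $\mathcal{M}(\cdot,\mathbf{p})$ that is needed in the sequel. Since the argument is essentially the discretization in the first marginal of the standard dualization with $c$-transform, it is a direct adaptation of Proposition~4.2 in \cite{delbarrio2019} to a general Polish target space $\mathcal{Y}$ and a general cost satisfying \eqref{cost-cond}.
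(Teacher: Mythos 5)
Your argument is correct and is essentially the paper's route: the paper simply invokes the standard semi-discrete reduction of the Kantorovich dual (citing it as a straightforward adaptation of Proposition~4.2 in \cite{delbarrio2019}), namely identifying $f$ with the vector $\mathbf{z}=(f(\mathbf{x}_1),\dots,f(\mathbf{x}_m))$, replacing $g$ by the $c$-transform $\min_i\{c(\mathbf{x}_i,\cdot)-z_i\}$, and using \eqref{cost-cond} for the integrability bookkeeping, exactly as you do. Only cosmetic points: your ``$\geq$''/``$\leq$'' labels are swapped relative to the inequalities you actually prove, and the feasibility of $(f,g_{\mathbf{z}})$ should be read with the dual constraint imposed on $\operatorname{supp}(P)\times\operatorname{supp}(Q)$ (equivalently, one may get that direction directly by weak duality against any $\pi\in\Pi(P,Q)$).
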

\begin{Remark}\label{discrete_dual_remark}
Note that if $\varphi$ denotes an optimal transport potential from $P$ to $Q$ for the cost $c$, then
$$\mathcal{T}_c(P,Q)=\mathcal{M}((\varphi(\mathbf{x}_1),\dots, \varphi(\mathbf{x}_m)), \mathbf{p}),$$
which makes the link between  the optimal transport potentials and optimal values of \eqref{discrete_dual} as $\mathbf{z}=(\varphi(\mathbf{x}_1),\dots, \varphi(\mathbf{x}_m))$.
\end{Remark}
Let $X_1, \dots, X_n$ be a sequence of i.i.d. random variables with law $P$, since $X_k\in \mathbb{X}$ for all $k=1,\dots,n$, the empirical measure $P_n:=\frac{1}{n}\sum_{k=1}^n\delta_{X_k}$ belongs also to $\mathcal{P}(\mathbb{X})$. In consequence it can be written as $P_n:=\sum_{k=1}^mp_k^n\delta_{\mathbf{x}_k}$, where $p_1^n, \dots, p_m^n$ are real random variables such that $p_i^n\geq 0$, for all $i=1, \dots, m$, and $\sum_{k=1}^np_k^n=1$. 
 We want to study the weak limit of the sequence 
$
	\left\lbrace\sqrt{n}\left(\mathcal{T}_c(P_n,Q)- \mathcal{T}_c(P,Q)\right)\right\rbrace_{n\in \N}.
$ 
Consider a centered Gaussian vector,  $(X_1,\dots, X_m)$ with variance matrix
where 
\begin{equation}\label{sigma_matrix}
   \Sigma(\mathbf{p}):= \begin{bmatrix}
p_1(1-p_1) & -p_1p_2  & \cdots& -p_1p_m\\
-p_2p_1 & p_2(1-p_2) & \cdots&-p_2p_m\\
\vdots & \vdots & \ddots&\vdots\\
-p_mp_1 &  \cdots&p_mp_{m-1}&p_m(1-p_m)
\end{bmatrix}.
\end{equation}
and define the class of optimal values
\begin{equation}\label{opt}
    \operatorname{Opt}_c(P,Q):=\left\lbrace  \mathbf{z}\in \R^d: \ \mathcal{T}_c(P,Q)= \mathcal{M}(\mathbf{z}, \mathbf{p})\right\rbrace.
\end{equation}
The following result shows that the class of optimal values $    \operatorname{Opt}_c(P,Q)$ is non-empty and the main theorem makes sense.
\begin{Lemma}\label{Lemma:existence}
Let $P\in \mathcal{P}(\mathbb{X})$, $Q\in \mathcal{P}(\mathcal{Y})$ and $c$ be such that \eqref{cost-cond} holds, then $    \operatorname{Opt}_c(P,Q)\neq\{ \emptyset\}$.
\end{Lemma}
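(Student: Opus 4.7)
By Lemma~\ref{Lemma:dual} it suffices to show that $\mathcal{M}(\cdot,\mathbf{p})$ attains its supremum on $\R^m$. My plan is the direct method of the calculus of variations, based on three properties of $\mathbf{z}\mapsto\mathcal{M}(\mathbf{z},\mathbf{p})$: (i) concavity and upper semi-continuity on bounded sets; (ii) invariance under the diagonal shift $\mathbf{z}\mapsto\mathbf{z}+\lambda\mathbf{1}_m$; and (iii) coercivity modulo this shift.

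Property (i) is direct: for each $\mathbf{y}$, $\mathbf{z}\mapsto\min_i(c(\mathbf{y},\mathbf{x}_i)-z_i)$ is the minimum of finitely many affine functions, hence concave and continuous in $\mathbf{z}$; integration against $Q$ preserves concavity, while upper semi-continuity on bounded sets follows from dominated convergence with the upper envelope $c(\mathbf{y},\mathbf{x}_1)-z_1$ and the lower envelope coming from \eqref{cost-cond}. Property (ii) is the one-line check $\mathcal{M}(\mathbf{z}+\lambda\mathbf{1}_m,\mathbf{p})=\mathcal{M}(\mathbf{z},\mathbf{p})$ using $\sum_kp_k=\|\mathbf{p}\|_1=1$, so it is enough to prove existence of a maximizer in the hyperplane $H:=\{\mathbf{z}\in\R^m:z_1=0\}$.

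The crux is (iii): $\mathcal{M}(\mathbf{z},\mathbf{p})\to-\infty$ as $\|\mathbf{z}\|\to\infty$ within $H$. Given such a sequence $(\mathbf{z}^{(n)})$, set $M_n:=\max_iz_i^{(n)}\geq 0$. If $M_n\to+\infty$ along a subsequence, choose $k_n^\ast$ attaining the max; the pointwise bound $\min_i(c(\mathbf{y},\mathbf{x}_i)-z_i^{(n)})\leq c(\mathbf{y},\mathbf{x}_{k_n^\ast})-z_{k_n^\ast}^{(n)}$ together with the inequality $\sum_ip_iz_i^{(n)}\leq(1-p_1)M_n$ (valid because $z_1^{(n)}=0$ and $z_i^{(n)}\leq M_n$) combine to give
$$\mathcal{M}(\mathbf{z}^{(n)},\mathbf{p})\leq-p_1M_n+\max_{i}\int c(\mathbf{y},\mathbf{x}_i)\,dQ(\mathbf{y})\to-\infty,$$
since $p_1>0$. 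If instead $(M_n)$ is bounded, then $\|\mathbf{z}^{(n)}\|\to\infty$ forces some coordinate $z_k^{(n)}\to-\infty$; the bound $\min_i(c(\mathbf{y},\mathbf{x}_i)-z_i^{(n)})\leq c(\mathbf{y},\mathbf{x}_1)$ keeps the integral term bounded above, while $\sum_ip_iz_i^{(n)}\to-\infty$ because the diverging $p_kz_k^{(n)}$ dominates the bounded remaining terms.

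Combining (i)--(iii), any maximizing sequence in $H$ is bounded, Bolzano--Weierstrass extracts a convergent subsequence, and upper semi-continuity shows the limit achieves the supremum, which proves $\operatorname{Opt}_c(P,Q)\neq\emptyset$. The main technical obstacle is justifying the finiteness of $\int c(\mathbf{y},\mathbf{x}_i)\,dQ(\mathbf{y})$ for each $i$, used both in the upper envelope for (i) and in the coercivity bound for (iii); this is built into the implicit assumption $\mathcal{T}_c(P,Q)<\infty$ together with \eqref{cost-cond}, possibly after first reducing the maximizing sequence to its double $c$-transform, which only increases $\mathcal{M}$ and yields $c$-concave representatives with better a priori control.
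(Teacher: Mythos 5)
Your proof is correct and follows essentially the same route as the paper: the paper normalizes $z_1=0$ via the shift invariance, proves in Lemma~\ref{lem:Properties} that a superlevel set of $\mathcal{M}(\cdot,\mathbf{p})$ on that hyperplane is nonempty and compact (by exactly the bounds you use for coercivity, with $K=\max_i\int c(\mathbf{y},\mathbf{x}_i)\,dQ(\mathbf{y})$), and concludes by continuity and the Weierstrass extreme value theorem, which is the same mechanism as your maximizing-sequence argument. Note only that, like the paper's own proof (which invokes Lemma~\ref{lem:Properties}), your argument uses the integrability condition \eqref{assumptio:c_disc}, which is not listed among the hypotheses of the lemma as stated.
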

The proof of Lemma~\ref{Lemma:existence} is postponed to the appendix and is based on Lemma~\ref{lem:Properties}, described below and whose proof can be found also in the appendix. The following theorem shows that $
	\left\lbrace\sqrt{n}\left(\mathcal{T}_c(P_n,Q)- \mathcal{T}_c(P,Q)\right)\right\rbrace_{n\in \N}
$ has a weak limit.

\begin{Theorem}\label{theo:semi_disc}
Let $P\in \mathcal{P}(\mathbb{X})$, $Q\in \mathcal{P}(\mathcal{Y})$ and $c$ be such that it satisfies \eqref{cost-cond} and
\begin{align}\label{assumptio:c_disc}
\int c( \mathbf{y}, \mathbf{x}_i)dQ(\mathbf{y})<\infty, \ \text{ for all $i=1,\dots, m$}.
\end{align}
 Then 
\begin{align*}
	\sqrt{n}\left(\mathcal{T}_c(P_n,Q)- \mathcal{T}_c(P,Q)\right)\overset{w}\longrightarrow \sup_{\mathbf{z}\in \operatorname{Opt}_c(P,Q)} \sum_{i=1}^m z_i  X_i,
\end{align*}
where $(X_1,\dots,X_n)\sim \mathcal{N}(\mathbf{0}, \Sigma(\mathbf{p}))$ and $\Sigma(\mathbf{p}) $ is defined in \eqref{sigma_matrix}.
\end{Theorem}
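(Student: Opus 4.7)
The plan is to reduce the problem to a finite-dimensional delta method applied to the value function $F:\mathbf{q}\mapsto\sup_{\mathbf{z}\in\R^m}\mathcal{M}(\mathbf{z},\mathbf{q})$. By Lemma~\ref{Lemma:dual} we have $F(\mathbf{p})=\mathcal{T}_c(P,Q)$ and $F(\mathbf{p}^n)=\mathcal{T}_c(P_n,Q)$, so it suffices to analyse $\sqrt{n}(F(\mathbf{p}^n)-F(\mathbf{p}))$ where $\mathbf{p}^n=(p_1^n,\dots,p_m^n)$ is the empirical weight vector. The multivariate CLT for the multinomial distribution yields
$$\sqrt{n}(\mathbf{p}^n-\mathbf{p})\overset{w}\longrightarrow\mathbf{X}\sim\mathcal{N}(\mathbf{0},\Sigma(\mathbf{p})),$$
and the identity $\Sigma(\mathbf{p})\mathbf{1}=\mathbf{0}$ forces $\sum_i X_i=0$ almost surely, a fact that is crucial at the last step.

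The core step is to show that $F$ is Hadamard directionally differentiable at $\mathbf{p}$. Since the integral term in $\mathcal{M}(\mathbf{z},\mathbf{p})$ is independent of $\mathbf{p}$, the map $\mathbf{p}\mapsto\mathcal{M}(\mathbf{z},\mathbf{p})$ is smooth on the positive orthant, with gradient at $\|\mathbf{p}\|_1=1$ given by
$$\partial_{p_j}\mathcal{M}(\mathbf{z},\mathbf{p}) \;=\; z_j-\sum_{k=1}^m p_k z_k.$$
I would then invoke Danskin's theorem (Theorem~4.13 in \cite{Bonnans2000PerturbationAO}) to obtain
$$F'_{\mathbf{p}}(\mathbf{h}) \;=\; \sup_{\mathbf{z}\in\operatorname{Opt}_c(P,Q)}\sum_{j=1}^m\Bigl(z_j-\sum_{k=1}^m p_k z_k\Bigr)h_j.$$

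The delicate hypothesis behind Danskin's theorem is that $\operatorname{Opt}_c(P,Q)$ must enjoy a suitable compactness property. Because $\mathcal{M}(\mathbf{z},\mathbf{p})$ is invariant under the translation $\mathbf{z}\mapsto\mathbf{z}+t\mathbf{1}$ (the linear term increases by $t$ while the minimum inside the integral decreases by $t$), the optimal set is unbounded along $\mathbf{1}$. I would handle this by working modulo this invariance, i.e.\ on the slice $\{\mathbf{z}:z_1=0\}$, and proving that the restricted optimal set is compact via a coercivity argument for $\mathcal{M}(\cdot,\mathbf{p})$ that uses \eqref{cost-cond}, \eqref{assumptio:c_disc}, and the strict positivity of the weights $p_i$. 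Lemma~\ref{Lemma:existence} then provides a maximiser on this slice, while joint upper semicontinuity of $\mathcal{M}$ and continuity of its $\mathbf{p}$-gradient close Danskin's hypotheses. This verification of compactness modulo the shift invariance is the main obstacle.

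With Hadamard directional differentiability in hand, the delta method for directionally differentiable maps applied to $\sqrt{n}(\mathbf{p}^n-\mathbf{p})\overset{w}\longrightarrow\mathbf{X}$ gives $\sqrt{n}(F(\mathbf{p}^n)-F(\mathbf{p}))\overset{w}\longrightarrow F'_{\mathbf{p}}(\mathbf{X})$. Using $\sum_j X_j=0$ almost surely, the shift term $(\sum_k p_k z_k)(\sum_j X_j)$ vanishes, so
$$F'_{\mathbf{p}}(\mathbf{X}) \;=\; \sup_{\mathbf{z}\in\operatorname{Opt}_c(P,Q)}\sum_{j=1}^m z_j X_j,$$
which is the claimed limit. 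The reason the argument works without any assumption of finite support for $Q$, unlike in \cite{Sommerfeld2018}, is precisely that the normalisation by $\|\mathbf{p}\|_1$ in the definition of $\mathcal{M}$ turns the dual problem into an unconstrained optimisation in $\mathbf{z}$, enabling the direct form of Danskin's theorem instead of its Lagrangian counterpart.
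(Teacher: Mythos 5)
Your proposal follows essentially the same route as the paper: reduce to the value function $\Gamma(\mathbf{p})=\sup_{\mathbf{z}\in\R^m}\mathcal{M}(\mathbf{z},\mathbf{p})$, use the multinomial CLT for $\sqrt{n}(\mathbf{p}^n-\mathbf{p})$, compute $\nabla_{\mathbf{p}}\mathcal{M}(\mathbf{z},\mathbf{p})=(z_j-\sum_k p_kz_k)_j$ at $\|\mathbf{p}\|_1=1$, apply Danskin's theorem after restoring compactness on the slice $\{z_1=0\}$ (this is exactly point~3 of Lemma~\ref{lem:Properties}, and your coercivity plan matches the appendix proof), and conclude with R\"omisch's delta method.

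Two remarks. First, the step you pass over in one clause is the one the paper devotes Lemma~\ref{Lemma:Haddmard-dif} to: Danskin's theorem (Theorem~4.13 in \cite{Bonnans2000PerturbationAO}) yields only (Fr\'echet) \emph{directional} differentiability of $\Gamma$, whereas the delta method of \cite{Romisch} requires \emph{Hadamard} directional differentiability. The paper bridges this by proving that $\Gamma$ is locally Lipschitz on $\mathcal{U}_+$ (via a uniform bound on maximizers with $z_1=0$ and the Lipschitz character of $\mathbf{p}\mapsto\mathbf{p}/\|\mathbf{p}\|_1$) and then invoking Proposition~2.49 of \cite{Bonnans2000PerturbationAO}; it also deals with the technicality that Danskin's statement takes the perturbation parameter in a Banach space while $\mathcal{U}_+$ is merely open, via an extension of $\mathcal{M}$. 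These points are fixable with the ingredients you already have, but as written your argument jumps from directional to Hadamard differentiability without justification. Second, your final step differs harmlessly from the paper's: you remove the centering term $\sum_k p_kz_k$ using $\Sigma(\mathbf{p})\mathbf{1}=\mathbf{0}$, i.e.\ $\sum_i X_i=0$ almost surely, whereas the paper uses the invariance of $\operatorname{Opt}_c(P,Q)$ under shifts along $\mathbf{1}$; both give the stated limit.
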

When $\mathcal{X}$ and $\mathcal{Y}$ are contained in the same Polish space $(\mathcal{Z},d)$, a particular cost that satisfies the assumptions of Theorem \ref{theo:semi_disc} is the metric $d$. Then the value${\mathcal{T}_{d^p}(P,Q)}$ follows also the asymptotic behaviour described in Theorem~\ref{theo:semi_disc}.  Using the classic delta-method, the following theorem establishes the asymptotic behavior of the well known $p$-Wasserstsein distance  $\mathcal{W}_p(P,Q)=\sqrt[p]{\mathcal{T}_{d^p}(P,Q)}$.
\begin{Corollary}\label{coro:semi_disc}
Let $P\in \mathcal{P}(\mathbb{X})$ and $Q\in \mathcal{P}(\mathcal{Z}) $ be such that 
\begin{align}\label{assumptio:1_disc}
\int d( \mathbf{y}, \mathbf{x}_0)dQ(\mathbf{y})<\infty, \ \text{ for some $\mathbf{x}_0\in \mathcal{X}$}.
\end{align}
 Then, for any $p\geq 1$, we have
\begin{align*}
	\sqrt{n}\left(\mathcal{W}_p(P_n,Q)- \mathcal{W}_p(P,Q)\right)\overset{w}\longrightarrow \frac{1}{p\left(\mathcal{W}_p\right)^{p-1}}\sup_{\mathbf{z}\in \operatorname{Opt}_{d^p}(P,Q)} \sum_{i=1}^m z_i  X_i,
\end{align*}
where $(X_1,\dots,X_n)\sim \mathcal{N}(\mathbf{0}, \Sigma(\mathbf{p}))$ and $\Sigma(\mathbf{p}) $ is defined in \eqref{sigma_matrix}.
\end{Corollary}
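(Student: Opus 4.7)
The strategy is to deduce the corollary from Theorem~\ref{theo:semi_disc} applied with cost $c = d^p$, followed by the classical one-dimensional delta method applied to the scalar map $\phi(t) = t^{1/p}$.

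First, I would check that Theorem~\ref{theo:semi_disc} applies with $c(\mathbf{x},\mathbf{y}) = d(\mathbf{x},\mathbf{y})^p$. Condition \eqref{cost-cond} holds trivially with $a = b = 0$, since $c \geq 0$. For \eqref{assumptio:c_disc}, the elementary convexity bound
\begin{equation*}
d(\mathbf{y},\mathbf{x}_i)^p \leq 2^{p-1}\bigl(d(\mathbf{y},\mathbf{x}_0)^p + d(\mathbf{x}_0,\mathbf{x}_i)^p\bigr),
\end{equation*}
combined with the integrability hypothesis on $Q$ (read as a $p$-th moment condition, as is required for $\mathcal{W}_p$ itself to be finite), yields $\int d(\mathbf{y},\mathbf{x}_i)^p\, dQ(\mathbf{y}) < \infty$ for every $i = 1, \ldots, m$. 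Theorem~\ref{theo:semi_disc} then gives
\begin{equation*}
\sqrt{n}\bigl(\mathcal{T}_{d^p}(P_n,Q) - \mathcal{T}_{d^p}(P,Q)\bigr)\overset{w}\longrightarrow W := \sup_{\mathbf{z}\in \operatorname{Opt}_{d^p}(P,Q)}\sum_{i=1}^m z_i X_i,
\end{equation*}
with $(X_1, \ldots, X_m) \sim \mathcal{N}(\mathbf{0}, \Sigma(\mathbf{p}))$.

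Second, I would apply the delta method to transfer this to $\mathcal{W}_p = \mathcal{T}_{d^p}^{1/p}$. Let $t_0 := \mathcal{T}_{d^p}(P,Q) = \mathcal{W}_p(P,Q)^p$ and $\phi(t) = t^{1/p}$. Under the implicit assumption $\mathcal{W}_p(P,Q) > 0$ (i.e.\ $P \neq Q$, needed for the denominator in the claimed limit to make sense), $\phi$ is differentiable at $t_0$ with
\begin{equation*}
\phi'(t_0) = \tfrac{1}{p}\, t_0^{\,1/p - 1} = \frac{1}{p\,\mathcal{W}_p(P,Q)^{p-1}}.
\end{equation*}
A first-order Taylor expansion $\phi(\mathcal{T}_{d^p}(P_n,Q)) - \phi(t_0) = \phi'(t_0)\bigl(\mathcal{T}_{d^p}(P_n,Q) - t_0\bigr) + o_P\!\bigl(|\mathcal{T}_{d^p}(P_n,Q) - t_0|\bigr)$, multiplied by $\sqrt{n}$, combined with Slutsky's lemma and the convergence from the first step, delivers the announced weak limit.

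No substantial obstacle is expected: both steps are routine, the actual content of the corollary lying entirely in Theorem~\ref{theo:semi_disc}. The only subtlety worth a remark is that the weak limit $W$ is not Gaussian but a supremum of linear functionals of a Gaussian vector; since $\phi$ is a deterministic, continuously differentiable transformation in a neighbourhood of $t_0 > 0$, the classical scalar delta method applies verbatim regardless of the nature of $W$.
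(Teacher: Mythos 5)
Your proposal is correct and follows essentially the same route as the paper, which simply invokes Theorem~\ref{theo:semi_disc} with cost $c=d^p$ and then the classical scalar delta method for $t\mapsto t^{1/p}$ at $t_0=\mathcal{T}_{d^p}(P,Q)$. Your reading of \eqref{assumptio:1_disc} as a $p$-th moment condition (and your implicit requirement $\mathcal{W}_p(P,Q)>0$ when $p>1$) is exactly what the statement needs, so nothing essential is missing.
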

\begin{proof}[Proof of Theorem \ref{theo:semi_disc}]
\mbox{}\\*
We have divided the proof in a sequence of lemmas, finally we will use the Delta-Method for Hadamard differentiable maps. Recall that a function $f:\Theta\rightarrow \R$, defined in an open set $\Theta\subset \R^m$, is said to be \emph{Hadamard directionally differentiable} at $\theta\in \Theta$ if there exists a a function $f'_{\theta}:\R^m\rightarrow \R$ such that 
\begin{equation*}
    \frac{f(\theta+t_n\mathbf{h}_n)-f(\theta)}{t_n}\xrightarrow[n\rightarrow\infty]{} f_{\theta}(\mathbf{h}),\ \ \text{for all sequences $t_n\searrow 0$ and $\mathbf{h}_n\rightarrow \mathbf{h}$.}
\end{equation*}
Let $\Theta\subset\R^m$ be an open set, $\theta\in \Theta$ and $\{X_n\}_{n\in \N}$ be a sequence of random variables such that $X_n:\Omega_n\rightarrow \Theta $ and $r_n(X_n-\theta)\stackrel{w}{\longrightarrow} X$ for some sequence $r_n\rightarrow +\infty$ and some random element $X$ that takes values in $\R^m$. If  $f:\Theta\rightarrow\R$ is Hadamard differentiable at $\theta $ with derivative $f'_{\theta}(\cdot):\R^m\rightarrow \R$, then Theorem~1 in \cite{Romisch}, so-called delta-method, states that  $r_n(f(X_n)-f(\theta))\stackrel{w}{\longrightarrow} f'_{\theta}(X)$. \\
Since $p_i>0$ for all $i=1,\dots,m$, it is enough to consider the hyperoctant
\begin{equation*}
    \mathcal{U}_+:=\{\mathbf{p}\in \R^m:\ p_i>0, \ i=1, \dots, m\}.
\end{equation*}
The aim is to prove that the map \begin{align}\label{Gamma}
\begin{split}
       \Gamma:\mathcal{U}_+&\rightarrow \R\\
    \mathbf{p}&\mapsto \sup_{\mathbf{z}\in \R^m} \mathcal{M}(\mathbf{z}, \mathbf{p}) 
\end{split}
\end{align}
is Hadamard differentiable. With that in mind, we start by proving the following technical lemma. 
\begin{Lemma}\label{lem:Properties}
Let $Q\in \mathcal{P}(\mathcal{Y})$ be such that \eqref{assumptio:c_disc} holds for the cost $c$ which satisfies \eqref{cost-cond}.
Then the functional $\mathcal{M}$ satisfies:
\begin{enumerate}
    \item it is continuous in $\R^m\times \mathcal{U}_+$,
    \item it is continuously differentiable in the second variable over the set $\mathcal{U}_+$, moreover  $$\nabla_{\mathbf{p}} \mathcal{M}(\mathbf{z},\mathbf{p})=\left(\frac{z_i\sum_{j=1}^m p_j-\sum_{j=1}^m z_j p_j}{\left(\sum_{j=1}^m p_j\right)^2} \right)_{i=1,\dots, m}, \ \ \text{for all $\mathbf{z}\in \R^m$,}$$ 
    \item for all $\mathbf{p}_0\in \mathcal{U}_+$ there exists $\alpha\in \R$ and $\epsilon>0$ satisfying that the set

    \begin{equation*}
       \operatorname{lev}_{\alpha}\mathcal{M}(\cdot, \mathbf{p}):= \left\lbrace \mathbf{z}\in \R^m: \mathcal{M}(\mathbf{z}, \mathbf{p})\geq \alpha , \ z_1=0\right\rbrace
    \end{equation*}
        is compact and non empty, for all $\mathbf{p}_0\in \mathcal{U}_+$ such that $|\mathbf{p}_0-\mathbf{p}|<\epsilon$.
\end{enumerate}
\end{Lemma}
Note that Lemma \ref{lem:Properties} yields that the functional $\mathcal{M}$ satisfies the hypothesis of Danskin's theorem, see for instance Theorem 4.13 in \cite{Bonnans2000PerturbationAO}. In consequence $\Gamma$ is directionally differentiable. By proving that $\Gamma$ is locally Lipschitz in $\mathcal{U}_+$, we obtain the following result.
\begin{Lemma}\label{Lemma:Haddmard-dif}
The map $\Gamma$ defined in \eqref{Gamma} is Hadamard differentiable, with derivative
\begin{equation}\label{hada-der}
    \Gamma'_{\mathbf{p}} (\mathbf{q})=\sup_{\mathbf{z}\in \operatorname{Opt}_c(P,Q)}\sum_{i=1}^m \frac{z_i\sum_{j=1}^m p_j-\sum_{j=1}^m z_j p_j}{\left(\sum_{j=1}^m p_j\right)^2}q_i, \ \ \text{for all $\mathbf{q}\in \R^m.$}
\end{equation}
\end{Lemma}
Note that since $P=\sum_{j=1}^m p_j\delta_{\mathbf{x}_j}$ is a probability, $\sum_{j=1}^m p_j=1$ and \eqref{hada-der} can be simplified:
\begin{equation}\label{hada-derP}
    \Gamma'_{\mathbf{p}} (\mathbf{q})=\sum_{i=1}^m \left(z_i-\sum_{j=1}^m z_j p_j \right)q_i, \ \ \text{for all $\mathbf{q}\in \R^m.$}
\end{equation}
Then recall that the multivariate central limit theorem yields that 
\begin{equation*}
    \sqrt{n}\left( \mathbf{p}_n- \mathbf{p}\right)\xrightarrow{w} \mathbf{X}=(X_1,\dots,X_m)\sim N\left(\mathbf{0}, \Sigma(\mathbf{p})\right),
\end{equation*}
where $\Sigma(\mathbf{p})$ is defined in \eqref{sigma_matrix}.
The aforementioned Delta-Method for Hadamard differentiable maps and \eqref{hada-derP} yields that
\begin{equation*}
    \sqrt{n}\left( \Gamma(\mathbf{p}_n)- \Gamma(\mathbf{p})\right)\stackrel{w}\longrightarrow\sup_{\mathbf{z}\in \operatorname{Opt}_c(P,Q)}\sum_{i=1}^m \left(z_i-\sum_{j=1}^m z_j p_j \right) X_i.
\end{equation*}
Since $\mathbf{z}\in \operatorname{Opt}_c(P,Q)$ implies that $\mathbf{z}-\mathbf{1}\sum_{j=1}^m z_j p_j\in \operatorname{Opt}_c(P,Q)$, then we have that
\begin{equation*}
    \sqrt{n}\left( \Gamma(\mathbf{p}_n)- \Gamma(\mathbf{p})\right)\stackrel{w}\longrightarrow\sup_{\mathbf{z}\in \operatorname{Opt}_c(P,Q)}\sum_{i=1}^m z_i  X_i,
\end{equation*}
which concludes the proof.
\end{proof}
\begin{Remark}
Note that, while the idea of proving a central limit theorem for the optimal transport cost through the Hadamard differentiability properties of a suitable functional is not new at all (the same was done by \cite{Sommerfeld2018} and \cite{tameling2019}), the functional used in other references only holds for finitely (or countably) supported $Q\in \mathcal{P}(\mathbb{Y})$, with $\mathbb{Y}=\{ y_1,\dots, y_l\}$.  In \cite{Sommerfeld2018}  the dual problem deals with the functional. More precisely 
\begin{equation*}
 \mathcal{G}(\mathbf{u},\mathbf{v},\mathbf{p}, \mathbf{q}):=\sum_{k=1}^{m}p_k u_k +\sum_{k=1}^{l}q_k v_k .
\end{equation*}
The optimization problem is formulated under the following constraints 
\begin{equation*}
\sup_{\mathbf{u},\mathbf{v}} \mathcal{G}(\mathbf{u},\mathbf{v},\mathbf{p}, \mathbf{q}), \ \ s.t. \ \ u_k+v_j\leq c(x_k,y_j), \ \ \text{for $k\in \{1,\dots, m \} $ and $j\in \{1,\dots, l \} $.}
\end{equation*}
With regards to \cite{tameling2019},  the approach is based on a direct analysis of the primal formulation
which is also a constrained optimization problem.  The definition of $\mathcal{M}$ and Lemma~\ref{discrete_dual} allow to use Danskin's theorem instead of constrained versions such as Theorem 4.24 in \cite{Bonnans2000PerturbationAO}).  Moreover the division by the term $|| \mathbf{p}||_1$ in the definition of $\mathcal{M}$ implies the Hadamard differentiablity not only for a fixed direction, but in a general sense. This is a consequence of the existence of finite solutions of $\sup_{\mathbf{z}\in \R^m} \mathcal{M}(\mathbf{z}, \mathbf{p})$, for every $\mathbf{p}\in \mathcal{U}_+$. 
Moreover, the point~3 in Lemma~\ref{lem:Properties}  is no longer true without such regularization by $|| \mathbf{p}||_1$, and in consequence the argument does not hold. \end{Remark}
\section{Asymptotic Gaussian distribution optimal transport cost.}
The aim of this section is to show that in the case where $\mathcal{X},\mathcal{Y}\subset\R^d$ and under some regularity assumptions, enumerated as (A1)-(A3), the limit distribution is Gaussian.  
Let $Q\in \mathcal{P}(\R^d)$ be a probability measure uniformly continuous with respect to the Lebesgue measure in $\R^d$. Assume that $c(\mathbf{x},\mathbf{y})=h(\mathbf{x}-\mathbf{y})$ where $h:\R^d\rightarrow [0, \infty)$ is a non negative function satisfying: 
\begin{enumerate}
	\item[(A1):] $h$ is strictly convex on $\R^d$.
	\item[(A2):] Given a radius $r\in \R^+$ and an angle $\theta \in (0,\pi) $, there exists some $M:=M(r, \theta)>0$ such that for all $|\mathbf{p} |>M$, one can find a cone 
	\begin{align}
		K(r, \theta, \mathbf{z},\mathbf{p}):=\left\lbrace \mathbf{x}\in \R^d : | \mathbf{x}-\mathbf{p}|| \mathbf{z}|\cos(\theta/2)\leq \left< \mathbf{z},\mathbf{x}-\mathbf{p} \right>\leq  r| \mathbf{z}| \right\rbrace,
	\end{align}
	with vertex at $\mathbf{p}$ on which $h$ attains its maximum at $\mathbf{p}$.
	\item[(A3):] $\lim_{|\mathbf{x} | \rightarrow 0}\frac{h(\mathbf{x})}{|\mathbf{x} | }= \infty $.
\end{enumerate}
Under such assumptions, \cite{gangbo1996} shows the existence of an optimal transport map $T$ solving
\begin{align}\label{monge}
\mathcal{T}_c(P,Q):=\inf_{T}\int c(\textbf{x},T(\textbf{x})) d P(\textbf{x}), \ \ \text{and}\ \ T_{\#}P=Q.
\end{align}
The notation $T_{\#}P$ represents the \emph{Push-Forward} measure, it is the measure such that for each measurable set $A$ we have $T_{\#}P(A):=P(T^{-1}(A))$. The solution of \eqref{monge} is called \emph{optimal transport map} from $P$ to $Q$. Moreover it is defined as the unique Borel function satisfying 
\begin{equation}
   T(\mathbf{x})=\mathbf{x}-\nabla h^*(\nabla \varphi(\mathbf{x})), \ \ \text{where $\varphi$ solves \eqref{dual}.} 
\end{equation}
Here $h^*$ denotes the convex conjugate of $h$, see \cite{rockafellar1970convex}.
Such uniqueness enabled \cite{delbarrio2021central} to deduce the uniqueness, under additive constants, of the solutions of $\varphi$. They assumed (A1)-(A3) to show that if two $c-$concave functions have the same gradient almost everywhere for $\ell_d$ in a connected open set, then both are equal, up to an additive constant. In consequence assuming that $h$ is differentiable, the support of $Q$ is connected and with Lebesgue negligible boundary $\ell_d\left(\partial \operatorname{supp}(Q)\right)=0$, the uniqueness, up to additive constants, of the solutions of \eqref{dual} holds.  
The proof of the main theorem is consequence of Lemma~\ref{Remark_uniq}, which proves that there exists an unique, up to an additive constant, $\mathbf{z}\in \operatorname{Opt}(P,Q) $.  We use within this section the notation $\mathbf{1}:=(1,\dots,1)$.
\begin{Lemma}\label{Remark_uniq}
Under the hypothesis of Theorem~\ref{theo:semi_disc-p}, if $\tilde{\mathbf{z}}$, $\mathbf{z}\in \operatorname{Opt}_c(P,Q)$, then $\tilde{\mathbf{z}}=\mathbf{z}-L\mathbf{1}$ for some constant $L\in \R$.
\end{Lemma}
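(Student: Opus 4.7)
The plan is to translate the uniqueness question about vectors $\mathbf{z}\in\operatorname{Opt}_c(P,Q)$ into the uniqueness question about $c$-concave optimal potentials on $\mathcal{Y}$, which has already been recalled in this section as a consequence of \cite{delbarrio2021central}. Given any $\mathbf{z}\in\operatorname{Opt}_c(P,Q)$, I would introduce its $c$-transform
\begin{equation*}
\varphi_{\mathbf{z}}(\mathbf{y}):=\min_{i=1,\dots,m}\{c(\mathbf{x}_i,\mathbf{y})-z_i\},
\end{equation*}
which is $c$-concave by construction. Since $\sum_i p_i=1$, one has $\mathcal{M}(\mathbf{z},\mathbf{p})=\sum_i p_iz_i+\int\varphi_{\mathbf{z}}\,dQ$, so the pair $(\mathbf{z},\varphi_{\mathbf{z}})$ is dual-feasible and attains the value $\mathcal{T}_c(P,Q)$; in particular $\varphi_{\mathbf{z}}$ is an optimal transport potential from $Q$ to $P$ in the sense of \eqref{dual}.

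The key preparatory step is to recover $z_i$ as the $c$-conjugate of $\varphi_{\mathbf{z}}$, i.e.\ to show that $z_i=\min_{\mathbf{y}}\{c(\mathbf{x}_i,\mathbf{y})-\varphi_{\mathbf{z}}(\mathbf{y})\}$ for every $i=1,\dots,m$. Denoting the right-hand side by $\tilde z_i$, the dual feasibility of $(\mathbf{z},\varphi_{\mathbf{z}})$ yields $\tilde z_i\ge z_i$, while the pair $(\tilde{\mathbf{z}},\varphi_{\mathbf{z}})$ is itself dual-feasible, so
\begin{equation*}
\sum_{i=1}^m p_i\tilde z_i+\int\varphi_{\mathbf{z}}\,dQ\;\le\;\mathcal{T}_c(P,Q)\;=\;\sum_{i=1}^m p_iz_i+\int\varphi_{\mathbf{z}}\,dQ.
\end{equation*}
Combining $\tilde z_i\ge z_i$ with $p_i>0$ forces equality coordinatewise, so $\tilde z_i=z_i$ for every $i$.

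Applying the above construction to both $\mathbf{z}$ and $\tilde{\mathbf{z}}$, the functions $\varphi_{\mathbf{z}}$ and $\varphi_{\tilde{\mathbf{z}}}$ are two $c$-concave optimal potentials for the dual problem \eqref{dual}. Under the hypotheses inherited from Theorem~\ref{theo:semi_disc-p}, namely $(A1)$--$(A3)$ with $h$ differentiable and $\operatorname{supp}(Q)$ connected with $\ell_d(\partial\operatorname{supp}(Q))=0$, the uniqueness-up-to-additive-constants of $c$-concave solutions of \eqref{dual} recalled at the start of this section gives $\varphi_{\tilde{\mathbf{z}}}=\varphi_{\mathbf{z}}+L$ for some $L\in\R$. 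Feeding this back into the preparatory step,
\begin{equation*}
\tilde z_i=\min_{\mathbf{y}}\{c(\mathbf{x}_i,\mathbf{y})-\varphi_{\tilde{\mathbf{z}}}(\mathbf{y})\}=\min_{\mathbf{y}}\{c(\mathbf{x}_i,\mathbf{y})-\varphi_{\mathbf{z}}(\mathbf{y})\}-L=z_i-L
\end{equation*}
for every $i$, which is exactly the claim $\tilde{\mathbf{z}}=\mathbf{z}-L\mathbf{1}$.

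The main obstacle is the preparatory step: a priori the coordinates of an optimizer of $\mathcal{M}(\cdot,\mathbf{p})$ need not coincide with the $c$-conjugate of the induced $\varphi_{\mathbf{z}}$. The argument crucially relies on $\mathbf{p}\in\mathcal{U}_+$, i.e.\ all weights $p_i$ strictly positive, which is exactly the standing assumption on $P$; if some $p_i$ vanished, the corresponding $z_i$ would only be bounded from above by its $c$-conjugate, and $\tilde{\mathbf{z}}=\mathbf{z}-L\mathbf{1}$ would in general fail on those indices. Once this step is established, the rest of the proof is a direct transport of the $\mathcal{Y}$-side uniqueness to the $\mathbb{X}$-side.
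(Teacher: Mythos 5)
Your proof is correct and follows essentially the same route as the paper: you pass from $\mathbf{z},\tilde{\mathbf{z}}$ to the $c$-concave potentials $\varphi_{\mathbf{z}},\varphi_{\tilde{\mathbf{z}}}$ on the $\mathcal{Y}$-side, invoke the uniqueness up to additive constants of dual solutions under (A1)--(A3) from \cite{delbarrio2021central}, and recover the vectors by $c$-conjugation at the points $\mathbf{x}_i$. The only difference is that you prove explicitly, via weak duality and $p_i>0$, the recovery identity $z_i=\min_{\mathbf{y}}\{c(\mathbf{x}_i,\mathbf{y})-\varphi_{\mathbf{z}}(\mathbf{y})\}$, a step the paper simply attributes to standard optimal transport theory.
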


The following theorem states, under the previous assumptions, that the limit distribution of Theorem~\ref{theo:semi_disc} is the centered Gaussian variable $\sum_{i=1}^m z_i  X_i$, where $\mathbf{z}\in \operatorname{Opt}_c(P,Q)$. 
Note that Lemma~\ref{Remark_uniq} implies that the random variable  $\sum_{i=1}^m z_i  X_i$ follows the same distribution independently of the chosen $\mathbf{z}\in \operatorname{Opt}_c(P,Q)$, which is $\mathcal{N}(0,\sigma^2(P,\mathbf{z})),$  with
\begin{equation}\label{definition_gaussian}
 \sigma^2(P,\mathbf{z})=\operatorname{Var}( \sum_{i=1}^m z_i  X_i)\ \ \text{and } \ \ (X_1,\dots,X_m)\sim\mathcal{N}(\mathbf{0},\Sigma(\mathbf{p})),
\end{equation}
where $\Sigma(\mathbf{p})$ is defined in \eqref{sigma_matrix}.
Since for every $\lambda\in \R$,  we have that $ \sigma^2(P,\mathbf{z})= \sigma^2(P,\mathbf{z}+\lambda\mathbf{1})$,  then the asymptotic variance obtained in the following theorem is well defined.
\begin{Theorem}\label{theo:semi_disc-p}
Let $P\in \mathcal{P}(\mathbb{X})$ and $Q\in \mathcal{P}(\R^d)$ be such that $Q\ll\ell_d$ and its support is connected with Lebesgue negligible boundary. If the cost $c$ satisfies (A1)-(A3) and 
\begin{align*}
\int c( \mathbf{y}, \mathbf{x}_i)dQ(\mathbf{y})<\infty, \ \text{ for all $i=1,\dots, m$.}
\end{align*}
Then 
\begin{align*}
	\sqrt{n}\left(\mathcal{T}_c(P_n,Q)- \mathcal{T}_c(P,Q)\right)\stackrel{w}\longrightarrow \mathcal{N}(0,\sigma^2(P,\mathbf{z})), 
\end{align*}
 with $\sigma^2(P,\mathbf{z})$ defined in \eqref{definition_gaussian}
for $\mathbf{z}\in\operatorname{Opt}_c(P,Q)$.
\end{Theorem}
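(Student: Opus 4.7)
The plan is to deduce Theorem~\ref{theo:semi_disc-p} almost immediately from two ingredients already at hand: the general semidiscrete CLT of Theorem~\ref{theo:semi_disc}, which identifies the weak limit as the supremum of a Gaussian process indexed by $\operatorname{Opt}_c(P,Q)$, and the uniqueness statement of Lemma~\ref{Remark_uniq}, which says that under the present regularity hypotheses this optimal set is a single line $\{\mathbf{z}+\lambda\mathbf{1}:\lambda\in\R\}$. The remaining task is then to show that the random linear functional $\mathbf{z}\mapsto\sum_{i=1}^m z_iX_i$ is constant along that line, so that the supremum collapses to its value at any fixed $\mathbf{z}\in\operatorname{Opt}_c(P,Q)$.

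First I would check that the hypotheses of Theorem~\ref{theo:semi_disc} are met. The integrability condition \eqref{assumptio:c_disc} is assumed directly. For \eqref{cost-cond} one may take $a\equiv 0$ and $b\equiv 0$, since $c(\mathbf{x},\mathbf{y})=h(\mathbf{x}-\mathbf{y})\geq 0$ by (A1)--(A3) and the zero function is continuous and lies in $L^1$ of any probability measure. Theorem~\ref{theo:semi_disc} then yields
\begin{equation*}
\sqrt{n}\bigl(\mathcal{T}_c(P_n,Q)-\mathcal{T}_c(P,Q)\bigr)\overset{w}\longrightarrow\sup_{\mathbf{z}\in\operatorname{Opt}_c(P,Q)}\sum_{i=1}^m z_i X_i,\qquad (X_1,\dots,X_m)\sim\mathcal{N}(\mathbf{0},\Sigma(\mathbf{p})).
\end{equation*}

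The key elementary observation is that the multinomial covariance $\Sigma(\mathbf{p})$ has vanishing row sums: for each $i$, $\sum_{j=1}^m\Sigma(\mathbf{p})_{ij}=p_i(1-p_i)-\sum_{j\neq i}p_ip_j=p_i\bigl(1-\sum_{j=1}^m p_j\bigr)=0$. Consequently $\operatorname{Var}\bigl(\sum_{i=1}^m X_i\bigr)=\mathbf{1}^{\top}\Sigma(\mathbf{p})\mathbf{1}=0$, and since the vector is centered, $\sum_{i=1}^m X_i=0$ almost surely. This implies that for every $\lambda\in\R$,
\begin{equation*}
\sum_{i=1}^m(z_i+\lambda)X_i=\sum_{i=1}^m z_iX_i+\lambda\sum_{i=1}^m X_i=\sum_{i=1}^m z_iX_i\quad\text{a.s.}
\end{equation*}
By Lemma~\ref{Remark_uniq}, every $\tilde{\mathbf{z}}\in\operatorname{Opt}_c(P,Q)$ has the form $\mathbf{z}+\lambda\mathbf{1}$ for a fixed $\mathbf{z}\in\operatorname{Opt}_c(P,Q)$, so the supremum above is almost surely equal to $\sum_{i=1}^m z_iX_i$, a centered Gaussian with variance $\sigma^2(P,\mathbf{z})$ as defined in \eqref{definition_gaussian}. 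The invariance of $\sigma^2(P,\cdot)$ under translation by $\lambda\mathbf{1}$, already recorded before the statement, ensures the limit is well defined.

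The only non-trivial ingredient, and what I would expect to be the main obstacle if one had to build everything from scratch, is Lemma~\ref{Remark_uniq} itself, since it relies on the fine $c$-concavity argument of \cite{delbarrio2021central} involving the connectedness and Lebesgue-negligible boundary of $\operatorname{supp}(Q)$ together with (A1)--(A3). Once this uniqueness up to additive constants is granted, the passage from the supremum limit of Theorem~\ref{theo:semi_disc} to a single Gaussian is a short computation exploiting the inherent rank deficiency of the multinomial covariance.
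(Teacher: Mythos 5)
Your proposal is correct and follows essentially the same route as the paper: invoke Theorem~\ref{theo:semi_disc} for the supremum limit and Lemma~\ref{Remark_uniq} to reduce $\operatorname{Opt}_c(P,Q)$ to a single translate class $\{\mathbf{z}+\lambda\mathbf{1}\}$. Your explicit check that $\Sigma(\mathbf{p})\mathbf{1}=\mathbf{0}$, hence $\sum_{i=1}^m X_i=0$ almost surely and the supremum collapses pointwise, is a detail the paper leaves implicit, but it is the same argument.
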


Since, in particular, the potential costs $c_p=|\cdot|^p$, for $p>0$, satisfy (A1)-(A3), then the following result follows immediately from Theorem~\ref{theo:semi_disc-p} and the Delta-Method for the function $t\mapsto |t|^{\frac{1}{p}}$. Recall that, in the potential cost cases, $\mathcal{T}_p(P,Q)$ denotes the optimal transport cost and $\mathcal{W}_p(P,Q)=\left(\mathcal{T}_p(P,Q)\right)^{\frac{1}{p}}$ the $p$-Wasserstein distance.
\begin{Corollary}\label{Coro:semi_disc-p}
Let $P\in \mathcal{P}(\mathbb{X})$ be as in \eqref{represen} and $Q\in \mathcal{P}(\R^d)$ be such that $Q\ll\ell_d$, has finite moments of order $p$ and its support is connected with Lebesgue negligible boundary.
Then, for every $p>1$, we have that
\begin{align*}
	\sqrt{n}\left(\mathcal{T}_p(P_n,Q)- \mathcal{T}_p(P,Q)\right)\xrightarrow{w} \mathcal{N}(0,\sigma^2(P,\mathbf{z})),
\end{align*}
and 
\begin{align*}
	\sqrt{n}\left(\mathcal{W}_p(P_n,Q)- \mathcal{W}_p(P,Q)\right)\stackrel{w}\longrightarrow \mathcal{N}\left(0,\left(\frac{1}{p\left(\mathcal{W}_p\right)^{p-1}}\right)^2\sigma^2(P,\mathbf{z})\right),
\end{align*}
 with $\sigma^2(P,\mathbf{z})$ defined in \eqref{definition_gaussian}
for $\mathbf{z}\in \operatorname{Opt}_{c_p}(P,Q)$.
\end{Corollary}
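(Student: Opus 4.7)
The plan is to apply Theorem~\ref{theo:semi_disc-p} with the cost $c_p(\mathbf{x},\mathbf{y})=|\mathbf{x}-\mathbf{y}|^p$ and then transfer the resulting CLT to $\mathcal{W}_p$ via the classical delta method.

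First I would verify that $h_p(\mathbf{x})=|\mathbf{x}|^p$ satisfies the structural conditions (A1)-(A3) for $p>1$. Strict convexity (A1) follows from the strict convexity of $t\mapsto t^p$ on $[0,\infty)$ combined with the triangle inequality for the Euclidean norm, whose equality case is rigid enough to preserve strictness under composition. The cone condition (A2) follows from the radial symmetry and radial monotonicity of $h_p$: for any sufficiently large $\mathbf{p}$ one may take a cone with vertex at $\mathbf{p}$ opening toward the origin and contained in the closed ball $\overline{B}(\mathbf{0},|\mathbf{p}|)$, on which $h_p$ attains its maximum at $\mathbf{p}$. The asymptotic condition (A3) is the classical superlinear growth used by Gangbo and McCann; for $h_p$ it reduces to $|\mathbf{x}|^{p-1}\to\infty$ as $|\mathbf{x}|\to\infty$, which holds since $p>1$.

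Next, the moment hypothesis of Theorem~\ref{theo:semi_disc-p}, namely $\int|\mathbf{y}-\mathbf{x}_i|^p\,dQ(\mathbf{y})<\infty$ for every $i=1,\dots,m$, follows from the assumed finiteness of the $p$-th moment of $Q$ and the elementary bound $|\mathbf{y}-\mathbf{x}_i|^p\leq 2^{p-1}(|\mathbf{y}|^p+|\mathbf{x}_i|^p)$. Combined with $Q\ll\ell_d$ and the connectedness plus $\ell_d$-negligibility of $\partial\operatorname{supp}(Q)$, all hypotheses of Theorem~\ref{theo:semi_disc-p} are met, yielding the first convergence
$$\sqrt{n}\bigl(\mathcal{T}_p(P_n,Q)-\mathcal{T}_p(P,Q)\bigr)\xrightarrow{w}\mathcal{N}(0,\sigma^2(P,\mathbf{z})).$$

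For the second convergence, observe that $P$ is purely atomic while $Q\ll\ell_d$, so $P\neq Q$ and hence $\mathcal{W}_p(P,Q)>0$. The map $f(t)=t^{1/p}$ is then differentiable at $t_0=\mathcal{T}_p(P,Q)$, with derivative $f'(t_0)=\frac{1}{p}t_0^{1/p-1}=\frac{1}{p\,\mathcal{W}_p(P,Q)^{p-1}}$. Applying the standard delta method to $\mathcal{W}_p(P_n,Q)=f(\mathcal{T}_p(P_n,Q))$ together with the first convergence produces the second statement, with variance multiplied by $f'(t_0)^2$. The only genuinely technical point is the verification of (A2), which is a classical fact in the Gangbo-McCann theory; otherwise the corollary is an immediate consequence of Theorem~\ref{theo:semi_disc-p} and the delta method.
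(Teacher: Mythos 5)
Your proposal is correct and follows essentially the same route as the paper: the corollary is obtained by checking that $c_p=|\cdot|^p$, $p>1$, satisfies (A1)--(A3) (the classical Gangbo--McCann conditions, understood with the superlinear growth at infinity) together with the moment hypothesis, invoking Theorem~\ref{theo:semi_disc-p}, and then applying the ordinary delta method to $t\mapsto t^{1/p}$. The only difference is that you spell out details the paper leaves implicit, notably that $\mathcal{W}_p(P,Q)>0$ (since $P$ is atomic and $Q\ll\ell_d$), which justifies differentiability of $t\mapsto t^{1/p}$ at $\mathcal{T}_p(P,Q)$.
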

\begin{proof}[Proof of Theorem \ref{theo:semi_disc-p}]
\mbox{}\\*
Note that Theorem~\ref{theo:semi_disc} states that \begin{align*}
	\sqrt{n}\left(\mathcal{T}_p(P_n,Q)- \mathcal{T}_p(P,Q)\right)\stackrel{w}\longrightarrow\sup_{z\in \operatorname{Opt}_{c}(P,Q)}\sum_{i=1}^m z_i  X_i,
\end{align*}
where $(X_1,\dots,X_n)\sim \mathcal{N}(\mathbf{0}, \Sigma(\mathbf{p}))$,  $\Sigma(\mathbf{p}) $ is defined in \eqref{sigma_matrix} and
$\operatorname{Opt}_c(P,Q)$ in \eqref{opt}.  Lemma~\ref{Remark_uniq} shows that such class is in fact a singleton, up to additive constants. 
\end{proof}
Note that Corollary~\ref{Coro:semi_disc-p} is a particular case of Corollary~\ref{coro:semi_disc} in the cases where the optimal transport potential is unique---the hypothesis of Theorem~\ref{theo:semi_disc-p} hold---which is the reason why the case $p=1$ can not be considered. With regards to the other potential costs, $p>1$,  it is straightforward to see that the hypothesis (A1)-(A3) hold, see for instance \cite{delbarrio2021central} or  \cite{gangbo1996}. 
\section{A central Limit theorem for the potentials.}
The aim of the section is to deduce a CLT for the potentials $(\psi,\varphi)$ of the transport problem. Recall that it refers to all pairs solving \eqref{dual}. In the semidiscrete case the potentials are pairs formed by $\mathbf{z}=(z_1,\dots,z_m)\in \operatorname{Opt}_c(P,Q)$ and  $
    \label{phi}\varphi(\mathbf{y}):=\inf_{i=1, \dots, m} \{ c(\mathbf{y},\mathbf{x}_i ) -{z_i} \}
$
Note that if $(\psi,\varphi)$ solves \eqref{dual} then $(\psi+C,\varphi-C)$, for all constant $C$. That makes necessary the study the properties of the following functional, defined in  $\langle \mathbf{1}\rangle^{\perp} $ which denotes the orthogonal complement of the vector space generated by $\mathbf{1}=(1,\dots, 1)$,
\begin{align*}
    \mathcal{M}_{\mathbf{p}}:\langle \mathbf{1}\rangle^{\perp} &\longrightarrow \R \\
    \mathbf{z}&\mapsto  \mathcal{M}(\mathbf{z},\mathbf{p}),
\end{align*}
where $\mathcal{M}(\mathbf{z},\mathbf{p})$ is defined in \eqref{functional}.
The idea behind the proof of the main result of the section, Threoem~\ref{Theo:potential}, is to use classical results in $M$-estimation. In consequence, the strictly positiveness of the Hessian of the previously defined $\Gamma$ is required. Such study of the second derivative have already been addressed in \cite{JUN2019}, where they used a completely different notation,  more related to the angle of mathematical analysis than the statistical.  Consequently,  this section changes slightly the notations of the previous ones. In order to make the readers work easier, we will try to be as consistent as possible with the notation adopted in the foregoing sections and, at the same time, to adapt it to the one proposed in \cite{JUN2019}.  First we will assume that $\mathcal{Y}$ is an open domain of a $d$-dimensional Riemannian manifold endowed with the volume measure $\mathcal{V}$ and metric $d$. We will follow the point of view and notations presented in \cite{JUN2019} . For further details we refer to this paper and references therein. \\ \\
Following the approach of \cite{JUN2019}, let's assume  the following assumptions on the cost:
\begin{align}
    \label{Reg}
    \tag{Reg}
    \text{$c(\mathbf{x}_i,\cdot)\in \mathcal{C}^{1,1}(\mathcal{Y})$, for all $i=1,\dots, m$,}
\end{align}
\begin{align}
        \label{Twist}
    \tag{Twist}
    \text{$D_{\mathbf{y}}c(\mathbf{x}_i,\mathbf{y}):\mathcal{Y}\rightarrow T^*_{\mathbf{y}}(\Omega)$ is injective as a function of $\mathbf{y}$, for all $i=1,\dots, m$},
\end{align}
where $D_{\mathbf{y}}c$ denotes the partial derivative of $c$ w.r.t. the second variable.
    For every $i\in \{1, \dots, m\}$ there exists $\Omega_i\subset \R^d$ open and convex set, and a $\mathcal{C}^{1,1}$ diffeomorphism $\exp^c_{i}:\Omega_i\rightarrow \Omega$ such that the functions
\begin{align}
    \label{QC}
    \tag{QC}
        \text{$\Omega_{i}\ni\mathbf{p}\mapsto f_{i,j}(\mathbf{p}):=c(\mathbf{x}_i,\exp^c_{i}\mathbf{p})-c(\mathbf{x}_j,\exp^c_{i}\mathbf{p})$  are quasi-convex for all $j=1,\dots, m$.}
\end{align}
Here quasi-convex, according to \cite{JUN2019}, means that for every $\lambda\in \R^d$ the sets $f_{i,j}^{-1}([-\infty, \lambda])$ are convex. Assumptions on the cost are not enough at all, it becomes necessary that the probability is supported in a \emph{$c$-convex set} $\Omega$, which  means that $ (\exp^c_{i})^{-1}(\Omega)$ is convex, for every $i=1,\dots, m$. Formally,
let $\mathcal{Y}\subset \mathcal{M}$ be a compact $c$-convex set, $P\in \mathcal{P}(\mathbb{X})$ be as in \eqref{represen} and suppose that
  \begin{equation}
 \label{Hol}
     \tag{Hol}
     \text{$Q\in \mathcal{P}(\mathcal{Y})$ satisfies $Q\ll\mathcal{V}$ with density $q\in \mathcal{C}^{o,\alpha}(\mathcal{Y})$}.
 \end{equation}
Recall that $f\in \mathcal{C}^{0,\alpha}(\mathcal{Y})$ if there exist $C>0$ such that $||f(x)-f(y)||\leq Cd(x,y)^{\alpha .} $ The last required assumption in  \cite{JUN2019} is that $Q$ satisfies a \emph{Poincar\'e Wirtinger inequality with constant} $C_{PW}$:  a probability measure $Q$ supported in a compact set $\mathcal{Y}\subset\mathcal{M}$ satisfies a Poincar\'e Wirtinger inequality with constant $C_{PW}$ if for every $f\in \mathcal{C}^1(\mathcal{Y})$ we have that for $Y\sim Q$
 \begin{equation}w
 \label{PW}
     \tag{PW}
     E(|f(Y)-E(f(Y))|)\leq C_{PW} E(|\nabla f(Y)|).
 \end{equation} 
 In order to clarify the feasibility of such assumptions, we will provide some insights on them at the end of the section.\mbox{}\\*
 \cite{JUN2019} proved the following assertions.
\begin{enumerate}
    \item \label{item_Lemma.1} Under assumptions \eqref{Reg} and \eqref{Twist} the function $\mathcal{M}(\cdot, \mathbf{p})$ is concave and differentiable with derivative
 $$\nabla_{\mathbf{z}}\mathcal{M}(\mathbf{z}, \mathbf{p})= (-Q(A_2(\mathbf{z}))+p_2, \dots, -Q(A_m(\mathbf{z}))+p_m ),$$
 where 
\begin{align}\label{A_k}
A_k(\mathbf{z}):=\{ \mathbf{y} \in \R^d :\ \ c(\mathbf{x}_k,\mathbf{y} ) -z_k <c(\mathbf{x}_i,\mathbf{y} )-z_i, \ \ \text{for all $i\neq k$} \}.
\end{align}
\item \label{item_Lemma.3} Under assumptions \eqref{Reg},\eqref{Twist} and \eqref{QC}, the function $\mathcal{M}_{\mathbf{p}}$ is twice continuously differentiable with    Hessian matrix $D_{\mathbf{z}}^2\mathcal{M}(\mathbf{z},{\mathbf{p}})=\left(\frac{\partial^2 \mathcal{M}_{\mathbf{p}}}{\partial z_i\partial z_j}(\mathbf{z})\right)_{i,j=1,\dots,m}$  and partial derivatives 
\begin{align}
\begin{split}\label{Hessian}
 \frac{\partial^2 \mathcal{M}}{\partial z_i\partial z_j}(\mathbf{z},{\mathbf{p}})&=\int_{A_k(\mathbf{z})\cap A_k(\mathbf{z}) } \frac{1}{|\nabla_{\mathbf{y}} c(\mathbf{x}_i,\mathbf{y})-\nabla_{\mathbf{y}} c(\mathbf{x}_j,\mathbf{y})|}dQ(\mathbf{y}), \ \ \text{if  $i\neq j$,}\\
\frac{\partial^2 \mathcal{M}}{\partial^2 z_i}(\mathbf{z},{\mathbf{p}})&=-\sum_{j\neq i}\frac{\partial^2 \mathcal{M}}{\partial z_i\partial z_j}(\mathbf{z},{\mathbf{p}}).  
\end{split}
\end{align}
\item Under assumptions \eqref{Reg},\eqref{Twist} and \eqref{QC}, and if $Q$ satisfies \eqref{PW}, defined below, then there exists a constant $C$ such that 
\begin{equation}\label{estrict_epsilon}
    \text{$\langle D^2_{\mathbf{z}}\mathcal{M}(\mathbf{z},{\mathbf{p}}) \mathbf{v}, \mathbf{v} \rangle \leq -C \epsilon ^3 |\mathbf{v}|^2$, for all $\mathbf{z}\in \mathcal{K}^{\epsilon}$ and $\mathbf{v}\in \langle \mathbf{1}\rangle^{\perp}$,}
\end{equation}
where 
$$ \mathcal{K}^{\epsilon}:=\{\mathbf{z}\in \R^d: \ Q(A_i(\mathbf{z}))>\epsilon, \ \text{ for all $i=1, \dots, m$.} \}.$$
\end{enumerate}

These previous results imply immediately the following Lemma.
\begin{Lemma}\label{Lemma_derivatve2} 
Let $\mathcal{Y}\subset \mathcal{M}$ be a compact $c$-convex set, $P\in \mathcal{P}(\mathbb{X})$ and   $Q\in \mathcal{P}(\mathcal{Y})$ . Under assumptions \eqref{Reg}, \eqref{Twist} and \eqref{QC} on the cost $c$ and \eqref{PW} and \eqref{Hol} on $Q$, we have that the function $\mathcal{M}_{\mathbf{p}}$ is strictly concave and twice continuously differentiable, with 
\begin{align*}
 \nabla \mathcal{M}_p(\mathbf{z})&=\nabla_{\mathbf{z}}\mathcal{M}(\mathbf{z}, \mathbf{p})|_{\langle \mathbf{1}\rangle^{\perp}}, \\
  D^2 \mathcal{M}_p(\mathbf{z})&=D^2_{\mathbf{z}}\mathcal{M}(\mathbf{z}, \mathbf{p})|_{\langle \mathbf{1}\rangle^{\perp}}.
\end{align*}
Moreover if $\bar{\mathbf{z}}\in \langle \mathbf{1}\rangle^{\perp}\cap  \operatorname{Opt}_c(P,Q)$, then there exists a constant $C$ such that 
\begin{equation}\label{Lemma_estrict}
    \text{$\langle D^2\mathcal{M}_{\mathbf{p}}(\bar{\mathbf{z}}) \mathbf{v}, \mathbf{v} \rangle \leq -C \inf_{i=1, \dots, m}|p_i| ^3 |\mathbf{v}|^2$, for all $\mathbf{v}\in \langle \mathbf{1}\rangle^{\perp}$.}
\end{equation}
\end{Lemma}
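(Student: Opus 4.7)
The plan is to derive everything from the three facts about $\mathcal{M}(\cdot,\mathbf{p}):\R^m\to\R$ cited just above in items~\ref{item_Lemma.1} and~\ref{item_Lemma.3} and in the bound \eqref{estrict_epsilon} on the cone $\mathcal{K}^{\epsilon}$. The identity $\mathcal{M}_{\mathbf{p}}=\mathcal{M}(\cdot,\mathbf{p})\big|_{\langle\mathbf{1}\rangle^{\perp}}$ reduces the whole statement to a chain rule on a linear subspace plus one evaluation of \eqref{estrict_epsilon} at any optimal $\bar{\mathbf{z}}$.

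First I would dispatch the differentiability and the formulas for $\nabla\mathcal{M}_{\mathbf{p}}$ and $D^2\mathcal{M}_{\mathbf{p}}$. By items~\ref{item_Lemma.1} and~\ref{item_Lemma.3}, $\mathcal{M}(\cdot,\mathbf{p})$ is concave and of class $C^2$ on $\R^m$; restricting to the linear subspace $\langle\mathbf{1}\rangle^{\perp}$ preserves both properties, and the chain rule immediately gives that the gradient and Hessian of the restriction are the restrictions of $\nabla_{\mathbf{z}}\mathcal{M}$ and $D^2_{\mathbf{z}}\mathcal{M}$ to $\langle\mathbf{1}\rangle^{\perp}$. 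This yields the two displayed identities in the statement.

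Next I would prove the quantitative bound \eqref{Lemma_estrict}. Fix $\bar{\mathbf{z}}\in\langle\mathbf{1}\rangle^{\perp}\cap\operatorname{Opt}_c(P,Q)$. Two ingredients are needed. First, under \eqref{Twist} together with $Q\ll\mathcal{V}$ from \eqref{Hol}, the Laguerre cells $A_i(\bar{\mathbf{z}})$ defined in \eqref{A_k} are pairwise disjoint and cover $\mathcal{Y}$ up to a $Q$-null set, so $\sum_{i=1}^m Q(A_i(\bar{\mathbf{z}}))=1=\sum_{i=1}^m p_i$. Second, since $\bar{\mathbf{z}}$ maximises $\mathcal{M}(\cdot,\mathbf{p})$ on $\R^m$ by Lemma~\ref{Lemma:dual}, the first-order condition together with the formula in item~\ref{item_Lemma.1} forces $Q(A_i(\bar{\mathbf{z}}))=p_i$ for every $i=1,\ldots,m$. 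Consequently $\bar{\mathbf{z}}\in\mathcal{K}^{\epsilon}$ for every $\epsilon<\inf_i p_i$; applying \eqref{estrict_epsilon} at $\bar{\mathbf{z}}$ and letting $\epsilon\nearrow\inf_i p_i$ (which is legitimate because the constant $C$ in \eqref{estrict_epsilon} is independent of $\epsilon$) yields
\begin{equation*}
\langle D^2_{\mathbf{z}}\mathcal{M}(\bar{\mathbf{z}},\mathbf{p})\mathbf{v},\mathbf{v}\rangle \leq -C\,(\inf_i p_i)^3\,|\mathbf{v}|^2, \qquad \mathbf{v}\in\langle\mathbf{1}\rangle^{\perp},
\end{equation*}
which, after identifying $D^2\mathcal{M}_{\mathbf{p}}(\bar{\mathbf{z}})$ with the restriction obtained in the first step, is precisely \eqref{Lemma_estrict}. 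Strict concavity of $\mathcal{M}_{\mathbf{p}}$ near $\bar{\mathbf{z}}$ follows from this pointwise negative definiteness together with $C^2$ smoothness.

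The only delicate point is bridging the strict inequality $Q(A_i(\mathbf{z}))>\epsilon$ defining $\mathcal{K}^{\epsilon}$ and the sharp equality $Q(A_i(\bar{\mathbf{z}}))=p_i$ at the optimum; this is precisely where one must check that $C$ in \eqref{estrict_epsilon} does not depend on $\epsilon$, so that the limit $\epsilon\nearrow\inf_i p_i$ is licit. Everything else is bookkeeping: disjointness and full $Q$-mass of the Laguerre cells reduce to the fact that the tie set $\{c(\mathbf{x}_i,\cdot)-z_i=c(\mathbf{x}_j,\cdot)-z_j\}$ has zero Lebesgue measure under \eqref{Reg} and \eqref{Twist}, hence zero $Q$-measure by \eqref{Hol}.
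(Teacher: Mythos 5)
Your argument is correct and follows the paper's own route: the differentiability statements are read off from the cited results of Kitagawa--M\'erigot--Thibert by restricting to $\langle\mathbf{1}\rangle^{\perp}$, and the bound \eqref{Lemma_estrict} is obtained from \eqref{estrict_epsilon} after observing that first-order optimality plus the gradient formula forces $Q(A_i(\bar{\mathbf{z}}))=p_i$, hence $\bar{\mathbf{z}}\in\mathcal{K}^{\epsilon}$ for $\epsilon$ up to $\inf_i p_i$. Your extra care about letting $\epsilon\nearrow\inf_i p_i$ (or equivalently absorbing a fixed factor such as $\epsilon=\tfrac12\inf_i p_i$ into the unspecified constant $C$) is exactly the small detail the paper's one-line conclusion leaves implicit.
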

\begin{proof}
Note that it only remains to prove that \eqref{Lemma_estrict} holds.  But it is a direct consequence of \eqref{estrict_epsilon}. Actually, since $\bar{\mathbf{z}}$ is the unique $\mathbf{z}\in \operatorname{Opt}(P,Q)$ then $Q(A_k(\bar{\mathbf{z}}))=p_k$ for $k=1,\dots,m$ and we can conclude.
\end{proof}
Then we can formulate the main theorem of the section which yields a central limit theorem for the empirical estimation of the potentials. We follow classical arguments of $M$-estimation by writing the function $\mathcal{M}_{\mathbf{p}}(\mathbf{z})$ as $ E(g(X, \mathbf{z}))$ with $X\sim P$ and $g:\mathbb{X}\times \langle \mathbf{1}\rangle^{\perp}\rightarrow \R$ defined by 
\begin{align}\label{g}
        g(\mathbf{x}_k, \mathbf{z})&:= z_k+\int \inf_{i=1, \dots, m} \{ c(\mathbf{y},\mathbf{x}_i ) -z_i \}dQ(\mathbf{y}),
\end{align}
for each $\mathbf{z}=(z_1,\dots,z_m)$. The weak limit is a centered multivariate Gaussian distribution with covariate matrix, depending on the optimal $\tilde{\mathbf{z}}$, defined by the bilinear map 
\begin{equation}\label{eq:sigmaZ}
\Sigma(\tilde{\mathbf{z}})=
\left(D^2\mathcal{M}_{\mathbf{p}}(\tilde{\mathbf{z}} )\right)^{-1}A
    \left(D^2\mathcal{M}_{\mathbf{p}}(\tilde{\mathbf{z}} )\right)^{-1},\ \text{
where $A=\sum_{i=1}^m p_i \nabla_{\tilde{\mathbf{z}}}g(\mathbf{x}_i, \mathbf{z}) \nabla_{\tilde{\mathbf{z}}}g(\mathbf{x}_i, \mathbf{z})^{t}.$}
\end{equation}
\begin{Theorem}\label{Theo:potential}
Let $\mathcal{Y}\subset \mathcal{M}$ be a compact $c$-convex set, $P\in \mathcal{P}(\mathbb{X})$ and $Q\in \mathcal{P}(\mathcal{Y})$. Under assumptions \eqref{Reg}, \eqref{Twist} and \eqref{QC} on the cost $c$, and \eqref{PW} and \eqref{Hol} on $Q$, we have that
\begin{equation}
    \sqrt{n}\left(\hat{\mathbf{z}}_n-\tilde{\mathbf{z}}\right)\stackrel{w}\longrightarrow N(\mathbf{0},\Sigma(\tilde{\mathbf{z}})),
\end{equation}
where $\tilde{\mathbf{z}}\in \langle \mathbf{1}\rangle^{\perp}\cap \operatorname{Opt}_c(P,Q)$ (resp. $\hat{\mathbf{z}}_n\in \langle \mathbf{1}\rangle^{\perp}\cap \operatorname{Opt}_c(P_n,Q)$), and 
$\Sigma(\tilde{\mathbf{z}})$ is defined in \eqref{eq:sigmaZ}.
\end{Theorem}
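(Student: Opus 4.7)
The plan is to recognize Theorem~\ref{Theo:potential} as a classical $M$-estimation statement and to apply the standard asymptotic-normality theorem for smooth $M$-estimators (in the spirit of Theorem~5.23 in van der Vaart's \emph{Asymptotic Statistics}). Since $\|\mathbf{p}_n\|_1=\|\mathbf{p}\|_1=1$, one has $\mathcal{M}_{\mathbf{p}}(\mathbf{z})=E_{X\sim P}[g(X,\mathbf{z})]$ and $\mathcal{M}_{\mathbf{p}_n}(\mathbf{z})=\tfrac{1}{n}\sum_{i=1}^n g(X_i,\mathbf{z})$ with $g$ as in \eqref{g}, so $\tilde{\mathbf{z}}$ and $\hat{\mathbf{z}}_n$ are the $M$-estimators associated with $\mathcal{M}_{\mathbf{p}}$ and its empirical analog on $\langle \mathbf{1}\rangle^\perp$. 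Lemma~\ref{Lemma_derivatve2} supplies the analytic ingredients for this machinery: strict concavity of the population objective, $\mathcal{C}^{2}$-regularity jointly in $(\mathbf{z},\mathbf{p})$, and the strictly negative-definite Hessian bound \eqref{Lemma_estrict}.

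The first step is to establish consistency $\hat{\mathbf{z}}_n\to\tilde{\mathbf{z}}$ in probability. The key observation is that on the simplex $\mathcal{M}_{\mathbf{p}_n}(\mathbf{z})-\mathcal{M}_{\mathbf{p}}(\mathbf{z})=\langle \mathbf{p}_n-\mathbf{p},\mathbf{z}\rangle$, so the empirical criterion is a linear perturbation of the population one whose slope tends to zero almost surely. To prevent the maximizer from escaping to infinity I would verify coercivity of $\mathcal{M}_{\mathbf{p}}$ on $\langle \mathbf{1}\rangle^\perp$: writing $\mathbf{z}=R\mathbf{u}$ with $|\mathbf{u}|=1$ and $\sum_i u_i=0$, the integrability assumption on the cost gives $\int\min_i\{c(\mathbf{y},\mathbf{x}_i)-Ru_i\}dQ(\mathbf{y})\leq C-R\max_i u_i$, while $\sum_k p_k u_k<\max_i u_i$ holds strictly on the unit sphere of $\langle \mathbf{1}\rangle^\perp$ (equality would force all $u_k$ equal and hence zero, contradicting $|\mathbf{u}|=1$). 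A compactness argument then produces $\delta>0$ with $\mathcal{M}_{\mathbf{p}}(R\mathbf{u})\leq -R\delta+C$ uniformly on the sphere. Together with strict concavity and standard argmax continuity, this yields $\hat{\mathbf{z}}_n\to\tilde{\mathbf{z}}$.

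Once consistency is in hand, I would Taylor-expand the first-order condition $\nabla\mathcal{M}_{\mathbf{p}_n}(\hat{\mathbf{z}}_n)=0$ around $\tilde{\mathbf{z}}$,
\begin{equation*}
0=\nabla\mathcal{M}_{\mathbf{p}_n}(\tilde{\mathbf{z}})+D^{2}\mathcal{M}_{\mathbf{p}_n}(\bar{\mathbf{z}}_n)(\hat{\mathbf{z}}_n-\tilde{\mathbf{z}})
\end{equation*}
for some $\bar{\mathbf{z}}_n$ on the segment joining $\tilde{\mathbf{z}}$ and $\hat{\mathbf{z}}_n$. By continuity of $D^{2}\mathcal{M}$ (Item~\ref{item_Lemma.3}) and consistency, $D^{2}\mathcal{M}_{\mathbf{p}_n}(\bar{\mathbf{z}}_n)$ converges in probability to $D^{2}\mathcal{M}_{\mathbf{p}}(\tilde{\mathbf{z}})$, and this limit is invertible on $\langle\mathbf{1}\rangle^\perp$ by \eqref{Lemma_estrict}. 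Simultaneously, $\sqrt{n}\,\nabla\mathcal{M}_{\mathbf{p}_n}(\tilde{\mathbf{z}})=\tfrac{1}{\sqrt{n}}\sum_{i=1}^n\nabla_{\mathbf{z}}g(X_i,\tilde{\mathbf{z}})$ is a normalized sum of i.i.d. bounded vectors which are centered because $E\,\nabla_{\mathbf{z}}g(X,\tilde{\mathbf{z}})=\nabla\mathcal{M}_{\mathbf{p}}(\tilde{\mathbf{z}})=\mathbf{0}$, and their covariance is exactly the matrix $A$ appearing in \eqref{eq:sigmaZ}. The multivariate CLT together with Slutsky's lemma then yield $\sqrt{n}(\hat{\mathbf{z}}_n-\tilde{\mathbf{z}})\stackrel{w}\longrightarrow N(\mathbf{0},\Sigma(\tilde{\mathbf{z}}))$.

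The main obstacle is the consistency step: one must ensure that the empirical maximizer exists on $\langle\mathbf{1}\rangle^\perp$ (which follows from Lemma~\ref{Lemma:existence} together with the translation invariance $\mathbf{z}\mapsto\mathbf{z}+\lambda\mathbf{1}$ that shifts $\mathcal{M}$ by a constant), is unique (by strict concavity), and does not escape to infinity. Uniform control of the Hessian along the sequence further needs $\min_i p_{n,i}$ to stay bounded away from zero, which is again a consequence of $\mathbf{p}_n\to\mathbf{p}$ with $\min_i p_i>0$. Once consistency and these uniform bounds are in place, Items~\ref{item_Lemma.1}--\ref{item_Lemma.3} from \cite{JUN2019} make the delta-method step essentially automatic.
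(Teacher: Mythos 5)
Your proposal is correct, and it follows the same overall strategy as the paper (casting the problem as $M$-estimation with the criterion $g$ of \eqref{g} and drawing all analytic ingredients from Lemma~\ref{Lemma_derivatve2}), but it differs in how the asymptotic normality is actually obtained. The paper verifies five conditions --- per-observation concavity of $\mathbf{z}\mapsto g(\mathbf{x}_k,\mathbf{z})$, uniqueness of the population maximizer, definition of the empirical maximizer, twice differentiability with strictly negative definite Hessian at $\tilde{\mathbf{z}}$, and the uniform gradient bound $|\nabla_{\mathbf{z}}g(\mathbf{x}_k,\mathbf{z})|\leq\sqrt{2(m-1)}$ --- and then invokes a ready-made CLT for concave $M$-estimators (Corollary 2.2 in \cite{Huang2007CENTRALLT}), which packages consistency and the quadratic expansion in one stroke. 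You instead re-prove that machinery by hand: coercivity of $\mathcal{M}_{\mathbf{p}}$ on $\langle\mathbf{1}\rangle^{\perp}$ (your sphere argument, using $\max_i u_i>\sum_k p_ku_k$ on the unit sphere of $\langle\mathbf{1}\rangle^{\perp}$, is sound), the observation that $\mathcal{M}_{\mathbf{p}_n}-\mathcal{M}_{\mathbf{p}}$ is the linear perturbation $\langle\mathbf{p}_n-\mathbf{p},\cdot\rangle$, argmax continuity for consistency, then the expansion of the first-order condition and Slutsky, recovering exactly $\Sigma(\tilde{\mathbf{z}})$ of \eqref{eq:sigmaZ} since $E\,\nabla_{\mathbf{z}}g(X,\tilde{\mathbf{z}})=\mathbf{0}$ and $\operatorname{Cov}(\nabla_{\mathbf{z}}g(X,\tilde{\mathbf{z}}))=A$. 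What the paper's route buys is brevity and the fact that concavity makes the escape-to-infinity and uniform-convergence issues disappear inside the cited result; what your route buys is a self-contained argument that does not rely on the concave-$M$-estimation literature and makes the role of each hypothesis explicit. Minor points to tidy up: the mean-value step should be written with the integral form of the remainder, $\int_0^1 D^2\mathcal{M}_{\mathbf{p}}(\tilde{\mathbf{z}}+t(\hat{\mathbf{z}}_n-\tilde{\mathbf{z}}))\,dt$, since the vector-valued mean value theorem with a single intermediate point is not valid; the Hessian of $\mathcal{M}_{\mathbf{p}_n}$ in $\mathbf{z}$ does not depend on $\mathbf{p}_n$ at all (the $\mathbf{p}$-part is linear in $\mathbf{z}$), so your worry about $\min_i p_{n,i}$ staying away from zero is only needed to guarantee that $\mathbf{p}_n\in\mathcal{U}_+$ and hence that $\hat{\mathbf{z}}_n$ exists, which holds eventually almost surely; and the translation $\mathbf{z}\mapsto\mathbf{z}+\lambda\mathbf{1}$ leaves $\mathcal{M}(\cdot,\mathbf{p})$ invariant (not merely shifted by a constant), which is exactly what lets you move the maximizer of Lemma~\ref{Lemma:existence} onto $\langle\mathbf{1}\rangle^{\perp}$; finally, invertibility of the averaged Hessian near $\tilde{\mathbf{z}}$ follows from \eqref{Lemma_estrict} together with continuity of $\mathbf{z}\mapsto Q(A_i(\mathbf{z}))$, as you indicate.
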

\begin{proof}
Now let $g:\mathbb{X}\times \langle \mathbf{1}\rangle^{\perp}\rightarrow \R$ be defined in \eqref{g}. It satisfies that if $X\sim P$ then $ E(g(X, \mathbf{z}))=\mathcal{M}_{\mathbf{p}}(\mathbf{z})$.
Lemma~\ref{Lemma_derivatve2} implies in particular that:
\begin{enumerate}[(i)]
    \item the function $\mathbf{z}\mapsto g(\mathbf{x}_k, \mathbf{z})$ is concave for every $\mathbf{x}_k$ .
    \item There exists a unique $$\tilde{\mathbf{z}}\in \arg\sup_{
    \mathbf{z}\in \langle \mathbf{1}\rangle^{\perp}} E(g(X, \mathbf{z})).$$
    \item The empirical potential is defined as $$\hat{\mathbf{z}}_n\in \arg\sup_{
    \mathbf{z}\in \langle \mathbf{1}\rangle^{\perp}} P_n(g(X, \mathbf{z})).$$
     \item The function $\mathbf{z}\mapsto E(g(X, \mathbf{z}))$ is twice differentiable in $\tilde{\mathbf{z}}$ with strictly negative definite Hessian matrix  $D^2\mathcal{M}_{\mathbf{p}}(\tilde{\mathbf{z}} )$.
     \item For every $\mathbf{z}\in \R^{m-1}$ we have that  $$|\nabla_{\mathbf{z}}g(\mathbf{x}_k, \mathbf{z})|\leq |(Q(A_2(\mathbf{z}))+1, \dots, Q(A_m(\mathbf{z}))+1 )|\leq \sqrt{2(m-1)}.$$
     
\end{enumerate}
Then all the assumptions of Corollary 2.2 in \cite{Huang2007CENTRALLT} are satisfied by the function $g$. In consequence we have the limit
\begin{equation}
    \sqrt{n}\left(\hat{\mathbf{z}}_n-\tilde{\mathbf{z}}\right)\xrightarrow[]{w} N\left(\mathbf{0},\left(D^2\mathcal{M}_{\mathbf{p}}(\tilde{\mathbf{z}} )\right)^{-1}A
    \left(D^2\mathcal{M}_{\mathbf{p}}(\tilde{\mathbf{z}} )\right)^{-1}\right),
\end{equation}
where $A=E\left( \nabla_{\mathbf{z}}g(X, \mathbf{z}) \nabla_{\mathbf{z}}g(X, \mathbf{z})^{t}\right).$ Note that computing $A$ we obtain the expression  \eqref{eq:sigmaZ}.
\end{proof}
For $\tilde{\mathbf{z}}$ defined as in Theorem~\ref{Theo:potential}, set
\begin{equation}\label{c-conj}
   \varphi(\mathbf{y}):=\inf_{i=1, \dots, m} \{ c(\mathbf{y},\mathbf{x}_i ) -\bar{z_i} \}
\end{equation}
and note that it is an optimal transport map from $Q$ to $P$, set also the value $i(y)\in \{ 1,\dots, m\}$ where the infumum of \eqref{c-conj} is attained. As previously  we can define their empirical counterparts 
\begin{equation}\label{c-con_n}
   \varphi_n(\mathbf{y}):=\inf_{i=1, \dots, m} \{ c(\mathbf{y},\mathbf{x}_i ) -\hat{z}_i \},
\end{equation}
which is an optimal transport map from $Q$ to $P_n$, and $i_{n}(y)$ the index where the infimum of \eqref{c-con_n} is attained. Then we have that 
\begin{align}\label{sandwich}
\sqrt{n} (\hat{z}_{i_n(y)}-\bar{z}_{i_n(y)}) \leq \sqrt{n}(\varphi_n(\mathbf{y})-\varphi(\mathbf{y}))\leq  \sqrt{n} (\hat{z}_{i(y)}-\bar{z}_{i(y)}).
\end{align}
We can take supremums over $\mathbf{y}$ in both sides of \eqref{sandwich} and derive that
\begin{align*}
  \sqrt{n}\sup_{i=1, \dots, m}( \hat{z}_{i}-\bar{z}_{i})=\sqrt{n}\sup_{\mathbf{y}\in \mathcal{Y}}(\varphi_n(\mathbf{y})-\varphi(\mathbf{y})).
\end{align*}
By symmetry we have that 
\begin{align*}
  \sqrt{n}\sup_{i=1, \dots, m}| \hat{z}_{i}-\bar{z}_{i}|=\sqrt{n}\sup_{\mathbf{y}\in \mathcal{Y}}|\varphi_n(\mathbf{y})-\varphi(\mathbf{y})|,
\end{align*}
which implies the following corollary.
\begin{Corollary}\label{Corollary_pot}
Under the hypothesis of Theorem~\ref{Theo:potential}, for $\varphi$ and $\varphi_n$ defined in \eqref{phi} and \eqref{phin}, we have that 
$$
\sqrt{n}\sup_{\mathbf{y}\in \mathcal{Y}}|\varphi_n(\mathbf{y})-\varphi(\mathbf{y})|\stackrel{w}\longrightarrow \sup_{i=1,\dots,m} |N_i|,
$$
where $(N_1,\dots, N_m)\sim N(\mathbf{0},\Sigma(\tilde{\mathbf{z}})).$
\end{Corollary}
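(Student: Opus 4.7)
The plan is to leverage the sandwich argument already developed in the paragraphs preceding the corollary to reduce the sup-norm convergence of the potentials to the Euclidean convergence in $\R^m$ of the optimizers $\hat{\mathbf{z}}_n$, and then invoke the continuous mapping theorem together with Theorem~\ref{Theo:potential}.

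More precisely, the inequality \eqref{sandwich} combined with taking suprema over $\mathbf{y}\in \mathcal{Y}$ on both sides gives, as noted in the excerpt,
\begin{equation*}
\sqrt{n}\sup_{\mathbf{y}\in \mathcal{Y}}|\varphi_n(\mathbf{y})-\varphi(\mathbf{y})|=\sqrt{n}\sup_{i=1,\dots,m}|\hat{z}_i-\tilde{z}_i|.
\end{equation*}
The argument is clean: at each $\mathbf{y}$, both $\varphi(\mathbf{y})$ and $\varphi_n(\mathbf{y})$ are infima of a finite collection, and plugging into $\varphi_n$ the minimizing index $i(\mathbf{y})$ of $\varphi$ and vice versa yields the two-sided bound; the maximum over $i$ on the right is attained at some $i^*$, and choosing $\mathbf{y}$ that belongs to the Laguerre cell of $i^*$ produces a matching lower bound, so the two suprema coincide. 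This identity is essentially a deterministic pointwise statement, independent of the randomness.

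Then by Theorem~\ref{Theo:potential}, $\sqrt{n}(\hat{\mathbf{z}}_n-\tilde{\mathbf{z}})\stackrel{w}\longrightarrow \mathbf{N}$ where $\mathbf{N}=(N_1,\dots,N_m)\sim N(\mathbf{0},\Sigma(\tilde{\mathbf{z}}))$, viewed as a sequence in $\R^m$ (or $\langle \mathbf{1}\rangle^{\perp}$). The map $\mathbf{v}\mapsto \sup_{i=1,\dots,m}|v_i|$ from $\R^m$ to $\R$ is continuous, so the continuous mapping theorem yields
\begin{equation*}
\sqrt{n}\sup_{i=1,\dots,m}|\hat{z}_i-\tilde{z}_i|\stackrel{w}\longrightarrow \sup_{i=1,\dots,m}|N_i|,
\end{equation*}
and combining this with the deterministic identity above finishes the proof.

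There is no real obstacle here: all the hard analytical work (the strict concavity of $\mathcal{M}_{\mathbf{p}}$ via Lemma~\ref{Lemma_derivatve2}, the $M$-estimation CLT, and the sandwich identity) is already in place. The only point worth double-checking is the consistency of the representatives $\tilde{\mathbf{z}}$ used to define $\varphi$ and the $\tilde{\mathbf{z}}$ produced by Theorem~\ref{Theo:potential}: both lie in $\langle \mathbf{1}\rangle^{\perp}\cap \operatorname{Opt}_c(P,Q)$ and Lemma~\ref{Lemma_derivatve2} guarantees uniqueness of this representative, so the identification is unambiguous and the continuous mapping argument transfers cleanly to $\sup_{\mathbf{y}}|\varphi_n(\mathbf{y})-\varphi(\mathbf{y})|$.
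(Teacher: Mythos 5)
Your proposal is correct and follows essentially the same route as the paper: the paper derives the corollary from the sandwich bound \eqref{sandwich}, the resulting identity $\sqrt{n}\sup_{i}|\hat{z}_i-\tilde{z}_i|=\sqrt{n}\sup_{\mathbf{y}\in\mathcal{Y}}|\varphi_n(\mathbf{y})-\varphi(\mathbf{y})|$, and then (implicitly) the continuous mapping theorem applied to the limit in Theorem~\ref{Theo:potential}. Your write-up merely makes explicit the continuity of $\mathbf{v}\mapsto\sup_i|v_i|$ and the nonemptiness of the relevant Laguerre cell needed for the matching lower bound, which the paper leaves tacit.
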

We will conclude by  some comments on the assumptions made in this section.
\begin{enumerate}
    \item Note that if we consider $\mathcal{M}=\R^d$ and the quadratic cost, then \eqref{Reg}, \eqref{Twist} and \eqref{QC} are obviously satisfied, by taking the function $\exp_j$ as the identity. Actually the map  $\mathbf{y}\mapsto|\mathbf{x}_j-\mathbf{y} |^2$ is $\mathcal{C}^{\infty}(\R^d)$ and $\mathbf{y}-\mathbf{x}_j$ is its derivative w.r.t. $\mathbf{y}$. Finally note that the function
    $$ \R^d\ni\mathbf{p}\mapsto |\mathbf{x}_i-\mathbf{p}|^2-|\mathbf{x}_j-\mathbf{p}|^2=|\mathbf{x}_i|^2-|\mathbf{x}_j|^2+\langle\mathbf{x}_j-\mathbf{x}_i,\mathbf{p}\rangle$$
    is linear in $\mathbf{p}$ and consequently quasi-convex.
\item Assumption~\eqref{PW} on the probability $Q$ has been widely studied in the literature for its implications in PDEs, see \cite{Acosta}. They proved that \eqref{PW} holds for a uniform distribution on a convex set $\Omega$. In \cite{Rathmair}, Lemma 1 claims that \eqref{PW} is equivalent to the bound of $\inf_{t\in \R} E(|f(Y)-t|)$, for every $f\in \mathcal{C}^1(\mathcal{Y})$. 
Let $Y\sim Q$ be such that there exists a $\mathcal{C}^1(\mathcal{Y})$ map $T$ satisfying the relation $T(U)=Y$, where $U$ follows a uniform distribution on a compact convex set $A$. Since $f\circ T\in \mathcal{C}^1(A)$,   by the powerful result of \cite{Acosta}, there exists $C_A>0$ such that 
\begin{align*}
       \inf_{t\in \R} E(|f(Y)-t|)&= \inf_{t\in \R} E(|f(T(U))-t|)\leq C_{A} E(|\nabla f(T(U))|\cdot ||T'(U)||_2)\\
    &\leq C_{A}\sup_{\mathbf{u}\in A}||T'(\mathbf{u})||_2E(|\nabla f(T(U))|),
\end{align*}
where $|| T'(U)||_2$ denotes the matrix norm. We deduce that in such cases \eqref{PW} holds. Note that the existence of such map relies on the well known existence of continuously differentiable optimal transport maps, which is treated by Caffarelli's theory. We refer to the most recent work \cite{cordero2019regularity} and references therein. However, as pointed out in \cite{JUN2019},  more general probabilities can satisfy that assumption such as radial functions on $\R^d$ with density 
\begin{equation*}
    \frac{p(|\mathbf{x}|)}{| \mathbf{x}|^{d-1}}, \ \ \text{for $|\mathbf{x}|\leq R$,  with $p=0$ in $[0,r] $ and concave in $[r,R]$. }
\end{equation*}
 Moreover the spherical uniform $\mathbb{U}_d$, used in \cite{delbarrio2020centeroutward} to generalize the distribution function to general dimension, where we first choose the radium uniformly and then, independently, we choose a point in the sphere $\mathbf{S}_{d-1}$, also satisfies \eqref{PW}. This can be proved by using previous reasoning with the function $T(\mathbf{x})=\mathbf{x}| \mathbf{x}|^{d-1}$, which is continuously differentiable. But note that such probability measure does not satisfy the regularity condition of continuous density over a convex set, in consequence some additional work should be done which is left as a future work. In the same way as the regularity of the transport can be derived in the continuous case by a careful treatment of the Monge-Amp\'ere equation, see \cite{DELBARRIO2020104671}, we conjecture that Theorem~\ref{Theo:potential} could hold also in that discrete framework. 
\end{enumerate}

\section{Appendix}
\begin{proof}[Proof of Lemma \ref{Lemma:existence}]
\mbox{}\\*
Note that Lemma~\ref{lem:Properties} yields that there exist $\alpha>0$ such that  $$\operatorname{lev}_{\alpha}\mathcal{M}(\cdot, \mathbf{p}):= \left\lbrace \mathbf{z}\in \R^m: \mathcal{M}(\mathbf{z}, \mathbf{p})\geq \alpha , \ z_1=0\right\rbrace$$ is non empty and compact. Lemma \ref{lem:Properties} also claims that $\mathcal{M}$ is continuous, then 
$$\sup_{\mathbf{z}\in \R^m} \mathcal{M}(\mathbf{z}, \mathbf{p})=\sup_{\mathbf{z}\in \operatorname{lev}_{\alpha}\mathcal{M}(\cdot, \mathbf{p})} \mathcal{M}(\mathbf{z}, \mathbf{p}) $$
whose supremun is attained in some $\mathbf{z}\in \operatorname{lev}_{\alpha}\mathcal{M}(\cdot, \mathbf{p})$ by Weierstrass extreme value theorem.
\end{proof}
\begin{proof}[Proof of Lemma \ref{lem:Properties}]
\mbox{}\\*
To prove the first point we realize that  $\mathcal{M}=F_1+F_2$, where
$$ F_1((\mathbf{z},\mathbf{p})):=\int \inf_{i=1, \dots, m} \{ c(\mathbf{y}-\mathbf{x}_i ) -z_i \}dQ(\mathbf{y})\ \ \text{and} \ \ F_2((\mathbf{z},\mathbf{p})):= \frac{1}{||\mathbf{p}||_1}\sum_{k=1}^{m}|p_k| z_k $$
are both continuous functions in $\R^m\times \mathcal{U}_+$. An easy computation shows the point 2. of Lemma~\ref{lem:Properties}. For the part 3,  we set $\mathbf{p}_0\in \mathcal{U}_+$ and a sequence $\{\mathbf{p}_n\}_{n\in \N}$ converging to $\mathbf{p}_0$ then the measure 
$ P_n:=\frac{1}{\sum_{j=1}^m p_j^n}\sum_{j=1}^m p_i^n\delta_{\mathbf{x}_j}$ converges weakly to $ P_0=\frac{1}{\sum_{j=1}^m p_j^0}\sum_{j=1}^m p_i^0\delta_{\mathbf{x}_j}$. In consequence $\mathcal{T}_c(P_n,Q)\rightarrow \mathcal{T}_c(P,Q)=\lambda>0$, when $n\rightarrow\infty$. This means that there exists some $\epsilon>0$ such that $\sup_{\mathbf{z}\in \R^m}\mathcal{M}(\mathbf{z}, \mathbf{p})\geq \alpha:=\lambda/2$, for all $\mathbf{p}$ such that $|\mathbf{p}_0-\mathbf{p}|<\epsilon$. Since $\mathcal{U}_+$ is open, we can assume that $\mathbb{B}_{2\epsilon}(\mathbf{p}_0)=\{ \mathbf{q}\in \R^m: |\mathbf{p}_0-\mathbf{q}|<2\epsilon\ \}\subset \mathcal{U}_+$. This means that
\begin{equation}\label{compactes}
   1-\epsilon>\frac{p_j}{||\mathbf{p}||_1}>\epsilon , \ \ \text{for all $\mathbf{p}\in \mathbb{B}_{\epsilon}(\mathbf{p}_0)$}
\end{equation}
Therefore the set
$
       \operatorname{lev}_{\alpha}\mathcal{M}(\cdot, \mathbf{p}):= \left\lbrace \mathbf{z}\in \R^m: \mathcal{M}(\mathbf{z}, \mathbf{p})\geq \alpha , \ z_1=0\right\rbrace
$
is not empty for every $\mathbf{p}\in \mathbb{B}_{\epsilon}(\mathbf{p}_0)$. The only point remaining concerns the compactness of $\operatorname{lev}_{\alpha}\mathcal{M}(\cdot, \mathbf{p})$. Note that 
\begin{align*}
 &\frac{1}{||\mathbf{p}||_1}\sum_{k=1}^{m}|p_k| z_k +\int \inf_{i=1, \dots, m} \{ c(\mathbf{y}-\mathbf{x}_i ) -z_i \}dQ(\mathbf{y})\\
 &\leq \frac{1}{||\mathbf{p}||_1}\sum_{k=1}^{m}p_k z_k +\inf_{i=1, \dots, m} \{ \int c(\mathbf{y}-\mathbf{x}_i )dQ(\mathbf{y}) -z_i \}\\
 &\leq \frac{1}{||\mathbf{p}||_1}\sum_{k=1}^{m}p_k z_k +\sup_{i=1, \dots, m}\int c(\mathbf{y}-\mathbf{x}_i )dQ(\mathbf{y}) -\sup_{i=1, \dots, m} z_i\\
 &=\frac{1}{||\mathbf{p}||_1}\sum_{k=1}^{m}p_k \left(z_k-\sup_{i=1, \dots, m} z_i\right) +\sup_{i=1, \dots, m}\int c(\mathbf{y}-\mathbf{x}_i )dQ(\mathbf{y}) \\
  &=\frac{1}{||\mathbf{p}||_1}\sum_{k=2}^{m}p_k \left(z_k-\sup_{i=1, \dots, m} z_i\right) +\sup_{i=1, \dots, m}\int c(\mathbf{y}-\mathbf{x}_i )dQ(\mathbf{y}) +\frac{1}{||\mathbf{p}||_1}p_1 \left(-\sup_{i=1, \dots, m} z_i\right)\\
  &\leq K -\frac{p_1}{||\mathbf{p}||_1} \left(\sup_{i=1, \dots, m} z_i\right),
\end{align*}
where $K=\sup_{i=1, \dots, m}\int c(\mathbf{y}-\mathbf{x}_i )dQ(\mathbf{y})\in \R$.
From \eqref{compactes} we deduce that $\sup_{k} z_k $ has to be bounded $\sup_{k} z_k \leq M$. The task now becomes to show that $\inf_{k} z_k $ has to be bounded. Since $z_1=0$ then 
\begin{align*}
 \mathcal{M}(\mathbf{z},\mathbf{p} )
 &\leq \inf_j \frac{p_j}{||\mathbf{p}||_1} \inf_{k} z_k+\sup_j \frac{p_j}{||\mathbf{p}||_1} \sup_{k} z_k+ \sup_{i}\int c(\mathbf{y}-\mathbf{x}_i )dQ(\mathbf{y})-z_1\\
 &\leq M+K+\inf_j \frac{p_j}{||\mathbf{p}||_1}\inf_{k} z_k.
\end{align*}
Finally \eqref{compactes} implies that $\inf_{k} z_k \geq M'$. In consequence the set $\operatorname{lev}_{\alpha}\mathcal{M}(\cdot, \mathbf{p})$ is bounded, it is clear that also closed, hence compact.
\end{proof}
\begin{proof}[Proof of Lemma \ref{Lemma:Haddmard-dif}]
\mbox{}\\*
Danskin's theorem (Theorem 4.13 in \cite{Bonnans2000PerturbationAO}) considers an optimization problem 
\begin{equation*}
    G(u)=\max_{x\in X}g(x,u)\ \ s.t. \ \ x\in \Phi,
\end{equation*}
where $u\in U$ and it is a Banach space with the norm $|\cdot|$, $X$ is a topological Hausdorff space, $\Phi\subset X$ is nonempty and closed and $g$ is continuous. If the function $g(x,\cdot)$  is differentiable, $g$ and its derivative w.r.t. $u$ in the direction $u$ $D_u g(x,u)$ are countinous in $X\times U$
and for all $u_0\in U$ there exists $\alpha\in \R$ and $\epsilon>0$ satisfying that the set
    \begin{equation*}
       \operatorname{lev}_{\alpha}\mathcal{M}(\cdot, \mathbf{p}):= \left\lbrace x\in \Phi: f(x,u)\geq \alpha \ \right\rbrace
    \end{equation*}
        is compact and non empty, for all $u$ such that $|u-u_0|<\epsilon$.
 Then the function $G$ is Fr\'echet directionally differentiable in $u_0$, which means that, for every $h\in U $, the limit $$G'_{u_0}(h)= \lim_{t\downarrow 0}\frac{G(u_0+th)-G(u_0)}{t} $$
exists and satisfies that $$G(u_0+h)=G(u_0)+G'_{u_0}(h)+o(|h|). $$
 Note that Lemma \ref{lem:Properties} yields that the functional $\mathcal{M}$ satisfies the assumptions of Danskin's theorem in $\R^d\times \mathcal{U}_+$, but $\mathcal{U}_+$ is not a Banach space, consequently some additional work has to be done.  Set $\mathbf{p}\in \mathcal{U}_+$ and note that there exists some $\epsilon>0$ such that $\overline{\mathbb{B}_{\epsilon}(\mathbf{p})}\subset \mathcal{U}_+$.  The function  $\mathcal{M}$ restricted to such set  $\mathcal{M}|_{\R^m\times\overline{\mathbb{B}_{\epsilon}(\mathbf{p})}}$ is a continuous  function in $\R^m\times\overline{\mathbb{B}_{\epsilon}(\mathbf{p})}$, then we can apply Theorem~1 in \cite{Whitney} to conclude that there exists a function $\tilde{\mathcal{M}}:\R^m\times \R^m\rightarrow \R$ such that 
 \begin{itemize}
 \item $\tilde{\mathcal{M}}=\mathcal{M}$ in $\R^m\times\overline{\mathbb{B}_{\epsilon}(\mathbf{p})}$,
 \item $\tilde{\mathcal{M}}$ is analytic in $\R^m\times \R^m\setminus\left(\R^m\times\overline{\mathbb{B}_{\epsilon}(\mathbf{p})}\right)$.
 \end{itemize}
 Note that the equality
 $$\Gamma(\mathbf{p})= \sup_{\mathbf{z}\in \R^m} \mathcal{M}(\mathbf{z}=\sup_{\mathbf{z}\in \R^m}\tilde{ \mathcal{M}}(\mathbf{z},\mathbf{p})$$
 implies that we can apply Danskin's theorem  to $\tilde{\mathcal{M}}$ and   in consequence $\Gamma$ is  Fr\'echet directionally differentiable in $\mathbf{p}.$ We claim that $\Gamma$ is locally Lipschitz. To prove it recall that Lemma~\ref{Lemma:existence} yields that there exist $\mathbf{z}$  such that $\mathcal{M}(\mathbf{z},\mathbf{p})=\mathcal{T}_c(P,Q)$ and $z_1=0$.
Lemma \ref{lem:Properties} claims also that there exists $\alpha\in \R$ and $\epsilon>0$ such $\operatorname{lev}_{\alpha}\mathcal{M}(\cdot, \mathbf{p})$ is non empty and compact for all $\mathbf{p}'\in \mathbb{B}_{\epsilon}(\mathbf{p})$. Then for all $\mathbf{p}'\in \mathbb{B}_{\epsilon}(\mathbf{p})$ and all $\mathbf{z}'$ solving  $\mathcal{M}(\mathbf{z}',\mathbf{p}')=\mathcal{T}_c(P',Q)$ and $z'_1=0$, we have that $|z'_1|\leq M$. Moreover such $\epsilon$ can be chosen  small enough in order to have $\mathbb{B}_{2\epsilon}(\mathbf{p})\subset \mathcal{U}_+$
In consequence we have that
\begin{align*}
   \Gamma(\mathbf{p})-\Gamma(\mathbf{p}')
   &\leq \mathcal{M}(\mathbf{z},\mathbf{p})-\mathcal{M}(\mathbf{z},\mathbf{p}')=\sum_{j=1}^m \left(\frac{p_k}{|\mathbf{p}|_1}-\frac{p'_k}{|\mathbf{p}'|_1}\right)z_j\\
   &\leq M \sum_{j=1}^m \left|\frac{p_k}{|\mathbf{p}|_1}-\frac{p'_k}{|\mathbf{p}'|_1}\right|=M|F(\mathbf{p})-F(\mathbf{p}')|_1, 
\end{align*}
 where $F(\mathbf{p}):=\left( \frac{p_1}{|\mathbf{p}|_1}, \dots, \frac{p_m}{|\mathbf{p}|_1} \right)$. We remark that the reverse inequality holds exactly with the same arguments. Then it suffices to prove that $F$ is Lipchitz in $\mathbb{B}_{\epsilon}(\mathbf{p})$. It can be seen by computing for any $\mathbf{p}'\in \mathbb{B}_{\epsilon}(\mathbf{p})$ the derivative
 \begin{equation*}
    F'(\mathbf{p}')=\frac{1}{\left(\sum_{i= 1}^m p'_i\right)^2} \begin{bmatrix}
{-\sum_{i\neq 1}p'_i} & -1  & \cdots& -1\\
-1 & {-\sum_{i\neq 2}p'_i} & \cdots&-1\\
\vdots & \vdots & \ddots&\vdots\\
-1 &  \cdots& -1&{-\sum_{i\neq m}p'_i}\\
\end{bmatrix},
 \end{equation*}
 and checking that the matrix norm  $||F'(\mathbf{p}') ||_{1}\leq K:= \frac{m-1+|\mathbf{p}|_1+\epsilon }{\left(|\mathbf{p}|_1-\epsilon \right)^2}$. Note that this inequality holds since $\epsilon$ is small enough to make $|\mathbf{p}|_1-\epsilon>0 $. Therefore we have that 
  \begin{equation*}
|\Gamma(\mathbf{p})-\Gamma(\mathbf{p}')|\leq MK| \mathbf{p}-\mathbf{p}'|_1\leq \sqrt{m}MK| \mathbf{p}-\mathbf{p}'|, \ \ \text{for all $\mathbf{p}'\in \mathbb{B}_{\epsilon}(\mathbf{p})$}.
 \end{equation*}
 Then $\Gamma$ is locally Lipschitz and directionally differentiable in $\mathcal{U}_+$. By Proposition 2.49 in \cite{Bonnans2000PerturbationAO},  we conclude that it is differentiable at $\mathbf{p}$ in the Hadamard sense.   
\end{proof}
\begin{proof}[Proof of Lemma \ref{Remark_uniq}]
\mbox{}\\*
Set $\tilde{\mathbf{z}},\mathbf{z}\in \operatorname{Opt}(P,Q)$. Lemma~\ref{Lemma:dual} implies that the $c$-concave functions $\psi( \mathbf{y}):=\inf_{i=1, \dots, m} \{ c(\mathbf{x}_i, \mathbf{y}) -z_i \}$  and $\tilde{\psi}( \mathbf{y}):=\inf_{i=1, \dots, m} \{ c(\mathbf{x}_i, \mathbf{y}) -\tilde{z}_i \}$ are both solutions of the dual optimal transport problem from $Q$ to $P$. Moreover, from optimal transport theory,  the map ${\psi}^c$ (resp. $\tilde{\psi}^c$) satisfies that $\varphi(\mathbf{x}_j)=z_j$ (resp. $\tilde{\varphi}(\mathbf{x}_j)=\tilde{z}_j$). Hence  Theorem~2.6 in \cite{delbarrio2021central} implies that $\psi=\tilde{\psi}+L$ holds in the interior of the support of $Q$. Since
$$\tilde{z}_i:=\tilde{\psi}^c (\mathbf{x}_i)=\psi^c(\mathbf{x}_i)-L= z_i -L,$$
we have that $\tilde{\mathbf{z}}=\mathbf{z}-L$ for some constant $L\in \R$.
\end{proof}
\bibliographystyle{apalike}  
\bibliography{references}  


\end{document}